

\documentclass[final,1p,times]{elsarticle}



\usepackage{graphicx}
\usepackage{amsmath}
\usepackage{amssymb}
\usepackage{color}
\usepackage{enumerate}
\usepackage{amsthm}

\usepackage{lineno}

\newcommand{\rmd}{\mathrm{d}}
\newcommand{\tr}{\mathrm{tr}}

\newtheorem{remark}{Remark}[section]
\newtheorem{theorem}{Theorem}[section]
\newtheorem{proposition}{Proposition}[section]
\newtheorem{corollary}{Corollary}[section]




\journal{Elsevier (To appear as J. Math. Anal. Appl. \textbf{495} (2021) 124673)}

\begin{document}

\begin{frontmatter}


\title{Invariant surfaces with coordinate finite-type Gauss map in simply isotropic space}



\author[AK]{Alev Kelleci\corref{cor}}
\ead{alevkelleci@hotmail.com}
\cortext[cor]{Corresponding author}

\author[LCBdS]{Luiz. C. B. da Silva}
\ead{luiz.da-silva@weizmann.ac.il}

\address[AK]{Department of Mathematics, F\i rat University, 23200 Elazi\u{g}, Turkey}

\address[LCBdS]{Department of Physics of Complex Systems, Weizmann Institute of Science, Rehovot 7610001, Israel}

\begin{abstract}
We consider the extrinsic geometry of surfaces in simply isotropic space, a three-dimensional space equipped with a rank 2 metric of index zero. Since the metric is degenerate, a surface normal cannot be unequivocally defined based on metric properties only. To understand the contrast between distinct choices of an isotropic Gauss map, here we study surfaces with a Gauss map whose coordinates are eigenfunctions of the surface Laplace-Beltrami operator.  We take into account two choices, the so-called minimal and parabolic normals, and show that when applied to simply isotropic invariant surfaces the condition that the coordinates of the corresponding Gauss map are eigenfunctions leads to planes, certain cylinders, or surfaces with constant isotropic mean curvature. Finally, we also investigate (non-necessarily invariant) surfaces with harmonic Gauss map and show this characterizes constant mean curvature surfaces.
\end{abstract}

\begin{keyword}
Simply isotropic space \sep Gauss map \sep helicoidal surface \sep parabolic revolution surface \sep invariant surface \sep Cayley-Klein geometry

\MSC 53A35 \sep 53B25 \sep 53C42

\end{keyword}

\end{frontmatter}




\section{Introduction}

Let $M^n$ be a connected $n$-dimensional submanifold in the $m$-dimensional \linebreak
Euclidean space $\mathbb E^{m}$. We say that $M$ is of \emph{$k$-type} if its position vector $\mathbf{x}$ can be expressed as a sum of eigenvectors of the Laplace-Beltrami operator, $\Delta$, corresponding to $k$ distinct eigenvalues, i.e., 
$\mathbf{x}=\mathbf{x}_0+\mathbf{x}_1+\dots+\mathbf{x}_k,$ for a constant vector $\mathbf{x}_0$ and smooth non-constant functions $\mathbf{x}_k$, $(i=1,\ldots,k)$ such that $\Delta \mathbf{x}_i =\lambda_i \mathbf{x}_i$, $\lambda_i \in \mathbb{R}$,  \cite{C1984}. Several results concerning this subject can be found, e.g., in \cite{AGM1998,C1987,CMN1986,DPV1990,G1988}. (See \cite{C2014,C1996} for a survey in $\mathbb{E}^m$.) 

In \cite{T1966}, Takahashi proved that a submanifold $M^n$ in $\mathbb E^m$ is of $1$-type, i.e., $-\Delta \mathbf{x}=\lambda \mathbf{x}$,  if and only if it is either a minimal submanifold of $\mathbb E^m$ ($\lambda=0$) or a minimal submanifold of the hypersphere $\mathbb S^{m-1}\subset\mathbb E^m$ ($\lambda \neq 0$). As a generalization, in \cite{G1990}, Garay proved that if a hypersurface $M^n$ of $\mathbb E^{n+1}$ satisfies 
\begin{equation} \label{Adelta}
-\Delta \mathbf{x}= A\mathbf{x},
\end{equation}
where $A$ is a diagonal matrix $A=\mbox{diag}(\lambda_1,\dots,\lambda_{n+1})$, i.e., the coordinate functions of $M^n$ are eigenfunctions of $\Delta$ with possibly distinct eigenvalues, then it is a minimal hypersurface or an open piece of either round spheres or generalized right spherical cylinders. If an immersion satisfies Eq. \eqref{Adelta}, the submanifold is said to be of \emph{coordinate finite-type} \cite{HV1991}. Very recently, Senoussi and Bekkar studied helicoidal surfaces in $\mathbb{E}^3$ of coordinate finite-type \cite{SB2015}. Furthermore, coordinate finite-type submanifolds in pseudo-Euclidean spaces have been studied in \cite{AFL1992,BZ2008,HB2009}.

On the other hand, coordinate finite-type submanifolds in Cayley-Klein spaces equipped with a degenerate metric have taken attention of many geometers. For example, in Galilean and simply isotropic spaces, see \cite{D2012,D2013}  and \cite{AE2017,BKY2016,BKY2017,CKK2019,KYB2016,KYB2017,KYK2017,KYY2017}, respectively. In particular, we mention the classification of  revolution \cite{KYB2017} and helicoidal \cite{KYK2017} surfaces in isotropic 3-space.

The notion of finite-type submanifolds were generalized by studying the so-called submanifolds with finite-type Gauss map in \cite{CMN1986,CP1987}. In particular, a submanifold of (pseudo-)Euclidean space has 1-type Gauss map if and only if its Gauss map $\mathbf{G}$ satisfies $-\Delta \mathbf{G}= \lambda \mathbf{G}$ for $\lambda \in \mathbb R$. In Euclidean 3-space, a surface with 1-type Gauss map must necessarily be a plane, a circular cylinder, or a sphere \cite{CP1987,J1996}. As a generalization of this condition, Dillen \emph{et al.} \cite{DPVG1990}, Baikoussis and Blair \cite{BB1992}, and Baikoussis and Verstraelen
\cite{BV1993} respectively studied revolution, ruled, and helicoidal surfaces in Euclidean space which satisfy 
\begin{equation} \label{AGdelta}
-\Delta \mathbf{G}= A \mathbf{G},
\end{equation}
where $A=\mbox{diag}(\lambda_1,\lambda_2,\lambda_{3})$. The surface is then said to have \emph{coordinate finite-type Gauss map}. The theory of Gauss map of finite-type was also extended to Lorentzian \cite{AFL1998,C1995,CR1995}  and (pseudo-) Galilean spaces \cite{D2013,D2015}. 

Our goal is to investigate surfaces with coordinate finite-type Gauss map in simply isotropic space $\mathbb{I}^3$. However, unlike surfaces in $\mathbb{E}^3$, an isotropic surface normal cannot be unequivocally defined based on metric properties only. Indeed, the most natural choice would be to define the normal with respect to the ambient degenerate metric, which leads to the constant vector field $\mathcal{N}=(0,0,1)$ pointing in the isotropic direction. Instead, we shall consider two alternatives, either by mimicking the Euclidean approach in defining a normal $\mathbf{N}_m$ using a cross-like product or by imposing that the normal $\mathbf{G}$ takes values on a unit sphere of parabolic type, see Eqs. \eqref{eq::DefMinimalNormal} and \eqref{pgm}, respectively.  Here, we characterize simply isotropic invariant surfaces with coordinate finite-type parabolic $\mathbf{G}$ and minimal $\mathbf{N}_m$ Gauss maps.

The remaining of this work is divided as follows. After preliminaries results on isotropic geometry, Sect. \ref{SectPre}, and on isotropic invariant surfaces, Sect. \ref{sectInvarSurf}, we characterize helicoidal and parabolic revolution surfaces with coordinate finite-type Gauss maps in Sects. \ref{S:ClassificationHelicoidal} and \ref{S:ClassificationParabRev}, respectively. In Sect. \ref{S:ClassificationHarmonicGaussMaps}, we address the problem of characterizing (non-necessarily invariant) isotropic surfaces  with harmonic Gauss maps. Finally, in the last section, we present our concluding remarks along with suggestions for further lines of investigation.

\section{Preliminaries: Differential Geometry in Simply Isotropic Space}\label{SectPre}
First, we would like to give a brief summary of basic definitions, facts, and equations in the theory of surfaces in simply isotropic 3-space  (see for detail Sach's book \cite{Sachs1990}).

The simply isotropic 3-space $\mathbb I^3$ arises as a Cayley-Klein geometry whose
absolute figure in the 3-dimensional real projective space $\mathbb P^3(\mathbb R)$ is given by $\left\{\omega,d_1,d_2,F\right\}$. 
Here, homogeneous coordinates $[x_0: x_1: x_2: x_3]$ are introduced such that $\omega:x_0=0$ is a plane in $\mathbb P^3(\mathbb R)$, $d_1:x_0=0=x_1+\mathrm{i} x_2$ and $d_2:x_0=0=x_1-\mathrm{i}x_2$ are two complex-conjugate straight lines in $\omega$, and $F=[0:0:0:1]$ is a point in the intersection $d_1\cap d_2$.

The group of rigid motions of $\mathbb{I}^3$ comes from the projectivies of $\mathbb{P}^3(\mathbb{R})$ that leave the absolute figure invariant. Introducing affine coordinates, it is given by a six-parameter group $\mathcal{B}_6$ of affine transformations $(x,y,z) \mapsto (\bar{x},\bar{y},\bar{z})$ in $\mathbb R^3$ given by
\begin{eqnarray} \nonumber
\bar{x} &=& a + x\cos \phi - y \sin \phi,\\ \label{B6}
\bar{y} &=& b + x\sin \phi +y\cos \phi,\\\nonumber
\bar{z} &=& c + c_1 x+c_2 y+ z,
\end{eqnarray}
where $\phi,a,b,c,c_1,c_2 \in \mathbb R$. Regarding this group of isotropic motions, they appear as Euclidean motions onto the $xy$-plane. The projection of a point $P(x,y,z)$ on the $xy$-plane, $\widetilde{P}(x,y,0)$, is called the \emph{top view projection} of $P$. Let $\mathbf{X}=(x_1,x_2,x_3)$ be a vector in $\mathbb I^3$. If $x_1=x_2=0$, then $\mathbf{X}$ is said to be \emph{isotropic}, otherwise it is \emph{non-isotropic}. A line with an isotropic director is an \emph{isotropic line} and a plane containing an isotropic line is an \emph{isotropic plane}.

Given two vectors $\mathbf{X}=(x_1,x_2,x_3)$ and $\mathbf{Y}=(y_1,y_2,y_3)$, the \emph{isotropic inner product} is calculated by 
 \begin{equation}
\left\langle \mathbf{X},\mathbf{Y}\right\rangle=x_1y_1+x_2y_2.     
 \end{equation}
The \emph{isotropic distance} between two points $P_i=(x_i,y_i,z_i)$  with $i\in\left\{1,2\right\}$ is defined by 
$
{d}(P_1,P_2)=\sqrt{(x_2-x_1)^2+(y_2-y_1)^2}.
$ If two points have the same top views, then they are said to be \emph{parallel}. The isotropic inner product between parallel points vanishes identically. In this case, we introduce the isotropic  \emph{co-distance} $\mbox{cd}((a,b,x_3),(a,b,y_3))=\vert y_3-x_3\vert$.

When dealing with surfaces $M^2$ in isotropic geometry we must distinguish between two cases depending on whether the induced metric is degenerate or not. We say that $M^2$ is an \emph{admissible surface} when the metric in $M^2$ induced by the isotropic scalar product has rank 2. If $M^2$ is parameterized by a $C^2$ map $\textbf{x}(u^1,u^2)=\left(x^1(u^1,u^2),x^2(u^1,u^2),x^3(u^1,u^2)\right)$, then it is admissible if and only if $X_{12}=x_1^1 x_2^2-x_2^1 x_1^2 \neq 0$, where $x_k^i= {\partial x^i}/{\partial u^k}$ and 
\begin{align} \label{xij}
\begin{split}
X_{ij}=\mbox{det}\left( \begin{array}{cc}
x_1^i&x_1^j\\
x_2^i&x_2^j
\end{array}\right).
\end{split}
\end{align}
As a consequence, every admissible $C^2$ surface $M^2$ can be locally parameterized as $\textbf{x}(u^1,u^2)=\left(u^1,u^2,f(u^1,u^2)\right)$: we say that $M$ is in its \emph{normal form}. 

The  \emph{isotropic first fundamental form} $\mathrm{I}$ and the coefficients of the \emph{isotropic metric} tensor $g_{ij}$  are given by
\begin{equation}
    \mathrm{I}=g_{ij}\rmd u^i\rmd u^j\mbox{ and }g_{ij}=\langle\mathbf{x}_i,\mathbf{x}_j\rangle,
\end{equation}
where we are adopting the convention of summing on repeated indexes. In the normal form, the first fundamental form becomes $\mathrm{I}=(\rmd u^1)^2+(\rmd u^2)^2$.

\subsection{Extrinsic geometry in simply isotropic space}

Unlike surfaces in Euclidean space, where we may define curvatures through the behavior of the Gauss map defined as the unit normal of the surface, in simply isotropic space this is not possible since the normal with respect to the isotropic metric is the constant vector field $\mathcal{N}=(0,0,1)$. However, the concept of Christoffel symbols $\Gamma_{ij}^k$ and the second fundamental form $\mathrm{II}=h_{ij}\rmd u^i\rmd u^j$ are still meaningful. Indeed, for an admissible surface it is valid $\det(\mathbf{x}_1,\mathbf{x}_2,\mathcal{N})\not=0$ and then, we write
\begin{equation}
    \mathbf{x}_{ij} = \Gamma_{ij}^k\mathbf{x}_k+h_{ij}\mathcal{N}.
\end{equation}
To write the coefficients $h_{ij}$ in terms of an inner product, we may take two paths. On the one hand, we may use the Euclidean inner $\cdot$ and vector $\times$ products and write
\begin{equation}\label{eq::DefMinimalNormal}
    h_{ij}=\mathbf{x}_{ij}\cdot\mathbf{N}_m,\mbox{ where }\mathbf{N}_m=\frac{\mathbf{x}_1\times\mathbf{x}_2}{X_{12}}=(\frac{X_{23}}{X_{12}},\frac{X_{31}}{X_{12}},1).
\end{equation}
We shall call $\mathbf{N}_m$ the \emph{minimal normal} since the trace of the Weingarten-like operator $-\rmd\mathbf{N}_m$ vanishes identically. Indeed, introducing $\mathbf{a}_i=\mathbf{x}_i\times\mathcal{N}$, $\mathbf{N}_m$ satisfies the Weingarten-like equation (as given in \cite{Sachs1990}, p. 160),
\begin{equation}
    \frac{\partial\mathbf{N}_m}{\partial u^i}=\frac{h_{i2}}{\sqrt{g}}\mathbf{a}_1-\frac{h_{1i}}{\sqrt{g}}\mathbf{a}_2,
\end{equation}
where $g=\det(g_{ij})$. It is easy to see that $\tr(-\rmd\mathbf{N}_m)=0$. (Its determinant, however, is non-trivial and gives the Gaussian curvature to be defined below.)

On the other hand, we may impose that the isotropic Gauss map should take values on a unit sphere of parabolic type. More precisely, we first take
\begin{equation}
    \Sigma^2=\{(x,y,z)\in\mathbb{I}^3:z=\frac{1}{2}-\frac{x^2+y^2}{2}\}
\end{equation}
as the reference sphere. (In simply isotropic space, we may have spheres of parabolic and cylindrical types \cite{daSilvaTamkang,Sachs1990}, but only spheres of parabolic type are admissible.) Then, we define the \emph{isotropic Gauss map} as \cite{daSilvaJG2019}
\begin{equation} \label{pgm}
    \mathbf{G}(u^1,u^2)=\left(\frac{X_{23}}{X_{12}},\frac{X_{31}}{X_{12}},\frac{1}{2}-\frac{1}{2}\Big[\Big(\frac{X_{23}}{X_{12}}\Big)^2+\Big(\frac{X_{31}}{X_{12}}\Big)^2\Big]\right),
\end{equation}
from which we also define an \emph{isotropic shape operator} $S=-\rmd \mathbf{G}$ \cite{daSilvaJG2019}. We shall also refer to $\mathbf{G}$ as the \emph{parabolic normal}. Finally, the coefficients of the first and second  fundamental forms can be written as
\begin{equation} \label{ffc}
g_{ij}=\left\langle \textbf{x}_i,\textbf{x}_j \right\rangle\mbox{ and } h_{ij}=\mathrm{II}(\textbf{x}_i,\textbf{x}_j)=\mathrm{I}(S(\mathbf{x}_i),\mathbf{x}_j).
\end{equation}
The \emph{isotropic Gaussian} and \emph{mean curvatures}, $K$ and $H$, are respectively defined as the determinant and trace of the shape operator $-(\rmd \mathbf{G})^i_j=g^{ik}h_{kj}$:
\begin{equation}
    K=\frac{h_{11}h_{22}-h_{12}^2}{g_{11}g_{22}-g_{12}^2}\mbox{ and } H=\frac{1}{2}\frac{g_{11}h_{22}-2g_{12}h_{12}+g_{22}h_{11}}{g_{11}g_{22}-g_{12}^2}.
\end{equation}

In order to understand the contrast between different choices of an isotropic Gauss map and to build some intuition, here we will study surfaces with coordinate finite-type Gauss map  using both $\mathbf{G}$ and $\mathbf{N}_m$, i.e., surfaces whose coordinates of the corresponding Gauss map are eigenfunctions of the Laplace operator. In terms of a local coordinate system, the Laplacian $\Delta$ is defined as usual by
\begin{equation} \label{lap}
\Delta=\frac{\partial_i\left(\sqrt{g}\,g^{ij}\partial_j\right)}{\sqrt{g}}=\frac{1}{\sqrt{g}}\left[{\partial_{1}}\Big(\frac{g_{22} \partial_{1}-g_{12} \partial_{2}}{\sqrt{g}}\Big)+{\partial_{2}}\Big(\frac{g_{11} \partial_{2}-g_{12} \partial_{1}}{\sqrt{g}}\Big)\right],
\end{equation}
where $\partial_{i}=\partial/\partial u^i$ and $g^{ij}$ is the inverse of the metric, i.e., $g^{ik}g_{kj}=\delta_j^i$.

\begin{remark}
The construction of the isotropic Gauss map employed above may be properly understood in the framework of the affine differential geometry. Indeed, many properties usually associated with the behavior of the unit normal of surfaces in Euclidean space can be extended to other contexts with the help of the notion of \emph{relative normal} \cite{Mueller1921,Simon1991}. Such construction requires the introduction of a vector field $\xi$ along a surface $M^2$ which is both (i) \emph{transversal} to $M^2$, i.e., $\xi$ is not tangent, and (ii) \emph{equiaffine}, i.e.,  $\rmd\xi$ is tangent.  The parabolic Gauss map $\mathbf{G}$ is a relative normal, but the same is not true for the minimal normal $\mathbf{N}_m$ since it is not equiaffine. In this latter case, we may see the introduction of the vector fields $\mathbf{a}_i=\mathbf{x}_i\times\mathcal{N}$ as an attempt to remedy this since $\mathbf{N}_m$ is transversal and equiaffine with respect to the distribution of planes $\mbox{span}\{\mathbf{a}_1,\mathbf{a}_2\}$. 
\end{remark}

\section{Simply Isotropic Invariant Surfaces}
\label{sectInvarSurf}

In this work, we will be mainly interested in invariant surfaces. Here, geometric quantities, such as the Gaussian and mean curvatures, only depend on their values assumed along the generating curve. In addition, the study of surfaces with coordinate finite-type Gauss map reduces to the analysis of ordinary differential equations. (The interested reader is referred to \cite{daSilvaMJOU2019} for more details on isotropic invariant surfaces.)

A \emph{1-parameter subgroup} $\mathcal{H}$ of the group $\mathcal{B}_6$ of isotropic isometries is given by a surjective continuous group homomorphism $\psi:(\mathbb{R},+)\to(\mathcal{B}_6,\circ)$, i.e., $\psi(0)=\mathrm{Id}$ is the identity rigid motion and $\psi(s+t)=\psi(s)\circ\psi(t)$. (It is common to denote $\psi_t=\psi(t)$ and, despite that $\psi$ is not unique, we may identify $\mathcal{H}$ with $\psi$ since $\psi(\mathbb{R})=\mathcal{H}$.) A surface $M^2$ is said to be \emph{invariant} if there exists a 1-parameter subgroup $\mathcal{H}$ such that $M=\psi_t(M)$ for all $t\in \mathbb{R}$. By intersecting an invariant surface with a plane, usually the $xz$- or the $xy$-plane, we obtain a curve $\alpha$, the \textit{generating curve} of $M$, and we can then  see $M^2$ as the result of continuously moving $\alpha$ under the action of $\psi_t$. In addition, we may parameterize $M^2$ as $\mathbf{x}(u,t)=\psi_t(\alpha(u))$. 

For simply isotropic rigid motions what happens in the top view plane is independent from what happens in the isotropic $z$-direction. Then, we may classify the 1-parameter subgroups based on their action on the top view plane and on the isotropic direction separately \cite{daSilvaMJOU2019,Sachs1990}. The 1-parameter subgroups of simply isotropic isometries can be distributed along 7 types, which are divided into two main categories: 

(a) \emph{helicoidal motions}, which in the isotropic direction act either as a pure translation or as the identity map:
\begin{equation}\label{eqHelicoidal1-parSubgroup}
t\in\mathbb{R}\mapsto \psi_t(\mathbf{x})=
\left(\arraycolsep=4pt\begin{array}{ccc}
\cos(t\phi) & -\sin(t\phi) & 0 \\
\sin(t\phi) & \cos(t\phi) & 0  \\
0 & 0 & 1 \\
\end{array}\right)
\left(\begin{array}{c}
x_1\\
x_2 \\
x_3\\
\end{array}\right)
+
\left(\begin{array}{c}
0\\
0 \\
c\,t\\
\end{array}\right);
\end{equation}

(b) \emph{limit motions} (\textit{Grenzbewegungen} \cite{Sachs1990}), which in the top view plane act either as a pure translation or as the identity map:
\begin{equation}\label{eqParabolic1-parSubgroup}
t\in\mathbb{R}\mapsto \psi_t(\mathbf{x})=
\left(\arraycolsep=4pt\begin{array}{ccc}
1 & 0 & 0 \\[4pt]
0 & 1 & 0 \\[4pt]
c_1t & c_2t & 1 \\[4pt]
\end{array}\right)
\left(\begin{array}{c}
x_1\\
x_2 \\
x_3\\
\end{array}\right)
+
\left(\begin{array}{c}
a\,t\\
b\,t \\
c\,t+(ac_1+bc_2)\frac{t^2}{2}\\
\end{array}\right).
\end{equation}
The constants $\phi,a,b,c,c_1,$ and $c_2$ are the same as those appearing in Eq. (\ref{B6}).

Invariant surfaces obtained from helicoidal motions will be called  \emph{helicoidal surfaces} while those obtained from limit motions will be called \emph{parabolic revolution surfaces}. Notice that helicoidal surfaces are foliated by helices while parabolic revolution surfaces are foliated by isotropic circles, i.e., parabolas whose symmetry axis is an isotropic line. In addition, we shall restrict ourselves to  \emph{invariant surfaces of i-type} \cite{daSilvaMJOU2019}, i.e., the generating curve $\alpha$ comes from an intersection of the surface with the isotropic $xz$-plane.

\subsection{Helicoidal surfaces} 

Let the generating curve  $\alpha$ be parameterized by arc-length, $\alpha(u)=(u,0,z(u))$. A helicoidal surface $ M^2_c$ is then parameterized as
\begin{equation}\label{hs1}
 M^2_c:  \mathbf{R}(u,t) = (u\cos t, u\sin t, z(u)+ct),\,u>0.
\end{equation}
where for simplicity we set $\phi=1$. The first and second fundamental forms of a helicoidal surface are given by \cite{daSilvaMJOU2019}
\begin{equation}
\mathrm{I}=\rmd u^2+u^2\rmd t^2
\mbox{ and }
\mathrm{II}=z''\rmd u^2-\frac{2c\,\rmd u\rmd t}{u}+ uz'\rmd t^2,
\end{equation}
from which  follows that the Gaussian and mean curvatures are
\begin{equation}\label{eq::KandHhelicoidalSurfaces}
K=\frac{z'z''}{u}-\frac{c^2}{u^4}
\mbox{ and }
H = \frac{z'+uz''}{2u}.
\end{equation}
When $c=0$, we say that $M_0^2$ is a \emph{revolution surface}.

In addition, the minimal normal is
\begin{equation}
\mathbf{N}_m = (\frac{c}{u}\sin t-z'\cos t,-\frac{c}{u}\cos t-z'\sin t,1),
\end{equation}
while the parabolic normal is
\begin{equation} \label{pgmh}
\mathbf{G}=(\frac{c}{u}\sin t-z'\cos t,-\frac{c}{u}\cos t-z'\sin t,\frac{1}{2}\Big(1-\frac{c^2}{u^2}-{z'}^2\Big)).
\end{equation}

Finally, the Laplace-Beltrami operator of a helicoidal surface is given by
\begin{equation} \label{laph}
\Delta=\frac{1}{u} \frac{\partial}{\partial u}+ \frac{\partial^2}{\partial u^2}+ \frac{1}{u^2} \frac{\partial^2}{\partial t^2}.
\end{equation}
In particular, the Laplacian is the same for all helicoidal surfaces.

\subsection{Parabolic revolution surfaces} \label{Prs}

Let the generating curve  $\alpha$ be parameterized by arc-length, $\alpha(u)=(u,0,z(u))$. A parabolic revolution surface $M^2_{(a,b,c,c_1,c_2)}$ is parameterized as
\begin{equation}
 M^2_{(a,b,c,c_1,c_2)}:\mathbf{P}(u,t)=(at+u,bt,ct+\frac{ac_1+bc_2}{2}t^2+c_1ut+z(u)),\,u,b>0.
\end{equation}
The corresponding first and second fundamental forms are given by \cite{daSilvaMJOU2019}
\begin{equation}
\mathrm{I}=\mathrm{d}u^2+2a\mathrm{d}u\rmd t+(a^2+b^2)\rmd t^2
\mbox{ and }
\mathrm{II}=z''\rmd u^2+2c_1\rmd u\rmd t+(ac_1+bc_2)\rmd t^2,
\end{equation}
from which it follows that the Gaussian and mean curvatures are
\begin{equation}
K=\frac{(ac_1+bc_2)z''}{b^2}-\frac{c_1^2}{b^2}
\mbox{ and }
H = \frac{bc_2-ac_1}{2b^2}+\frac{(a^2+b^2)z''}{2b^2}.
\end{equation}
When $c=ac_1+bc_2=0$, but $(a,b),(c_1,c_2)\not=(0,0)$, we say that $M^2_{(a,b,0,c_1,c_2)}$ is a \emph{warped translation surface}. Moreover, when $c=c_1=c_2=0$, but $(a,b)\not=(0,0)$, we say that $M_{(a,b,0,0,0)}^2$ is a \emph{translation surface}.

In addition, the minimal normal of a parabolic revolution surface is
\begin{equation} \label{mnParRev}
\mathbf{N}_m = (-c_1t-z',\frac{az'-c-bc_2t-c_1u}{b},1),
\end{equation}
while the parabolic normal is
\begin{equation} \label{pgmParRev}
\mathbf{G}=(-\frac{bc_1t+bz'}{b},\frac{az'-c-bc_2t-c_1u}{b},G^3),
\end{equation}
where 
\begin{equation}
G^3 = \frac{1}{2} -\frac{(c+c_1u)^2}{2b^2}+\frac{a(c+c_1u)}{b^2}z'-\frac{a^2+b^2}{2b^2}z'^2+ \frac{t}{b} \Big[(a c_2-bc_1)z'-c_2(c+c_1u)\Big]-\frac{t^2}{2}
   \left(c_1^2+c_2^2\right). \label{eq::3rdCoordNormalParabRevSurf}
\end{equation}

Finally, the Laplace-Beltrami operator of a parabolic revolution surface is given by
\begin{equation} \label{eq::LapParabRevSurf}
\Delta=\frac{a^2+b^2}{b^2}\frac{\partial^2}{\partial u^2}-\frac{2a}{b^2}\frac{\partial^2}{\partial u\partial t}+\frac{1}{b^2}\frac{\partial^2}{\partial t^2}.
\end{equation}

\section{Helicoidal Surfaces with Coordinate Finite-type Gauss Map}
\label{S:ClassificationHelicoidal}

Since the top view projection of both parabolic and minimal Gauss maps coincide, we may start investigating the eigenvalue problem for the first two coordinates of the minimal Gauss map $\mathbf{N}_m$. Since $N_m^3=1$,  the eigenvalue problem associated with the third coordinate of $\mathbf{N}_m$ is trivial. On the other hand, this is not the case for the last coordinate of the parabolic normal $\mathbf{G}$. Thus, after characterizing the surfaces whose minimal normal is of coordinate finite-type, we will also know the solutions for the first two coordinates of the parabolic Gauss map $\mathbf{G}$. After that, the strategy to complete the study of $\mathbf{G}$ will consist in checking the compatibility of the eigensolutions of the first coordinates $\{G^1,G^2\}$ with the eigenvalue problem for the last coordinate $G^3$.

\subsection{Helicoidal surfaces with coordinate finite-type minimal normal}

The Laplacian of the minimal Gauss map $\mathbf{N}_m$ of a helicoidal surface is 
\begin{equation}
\Delta \mathbf{N}_m = \left(\frac{z'-u^2 z'''-u z''}{u^2}\cos t,\,\frac{z'-u^2 z'''-u z''}{u^2}\sin t,\,0\right).
\end{equation}

Now, we would like to classify helicoidal surfaces given by Eq. \eqref{hs1} in $\mathbb I^3$ satisfying the coordinate finite-type equation \eqref{AGdelta}:
\begin{equation}
-\Delta (N_m^1,N_m^2,N_m^3)=(\lambda_1N_m^1,\lambda_2N_m^2,0).
\end{equation} 
The corresponding eigenvalue problems become
\begin{equation*}
\cos t \Big(\frac{u^2 z'''+uz''-z^{\prime}}{u^2}\Big)=\lambda_1(\frac{c}{u}\sin t-z'\cos t)
\end{equation*}
and
\begin{equation*}
\sin t \Big(\frac{u^2 z'''+uz''-z^{\prime}}{u^2}\Big)=\lambda_2(-\frac{c}{u}\cos t-z'\sin t).
\end{equation*}
Now, using that $\{\cos t,\sin t\}$ is a set of linearly independent functions, we have the following equations for $\lambda_1$ and $\lambda_2$:
\begin{equation}\label{Eq::EigenvProbLamb1HSurf}
\left\{
\begin{array}{c}
\lambda_i c = 0\\[4pt]
u^2 z'''+uz''-(1-\lambda_i u^2)z'=0\\
\end{array}
\right.,\,i=1,2.
\end{equation}
Analyzing all possibilities, we have the following classification of helicoidal surfaces whose minimal normal $\mathbf{N}_m$ is of coordinate finite-type.

\begin{figure}[t]
    \centering
    \includegraphics[width=0.75\linewidth]{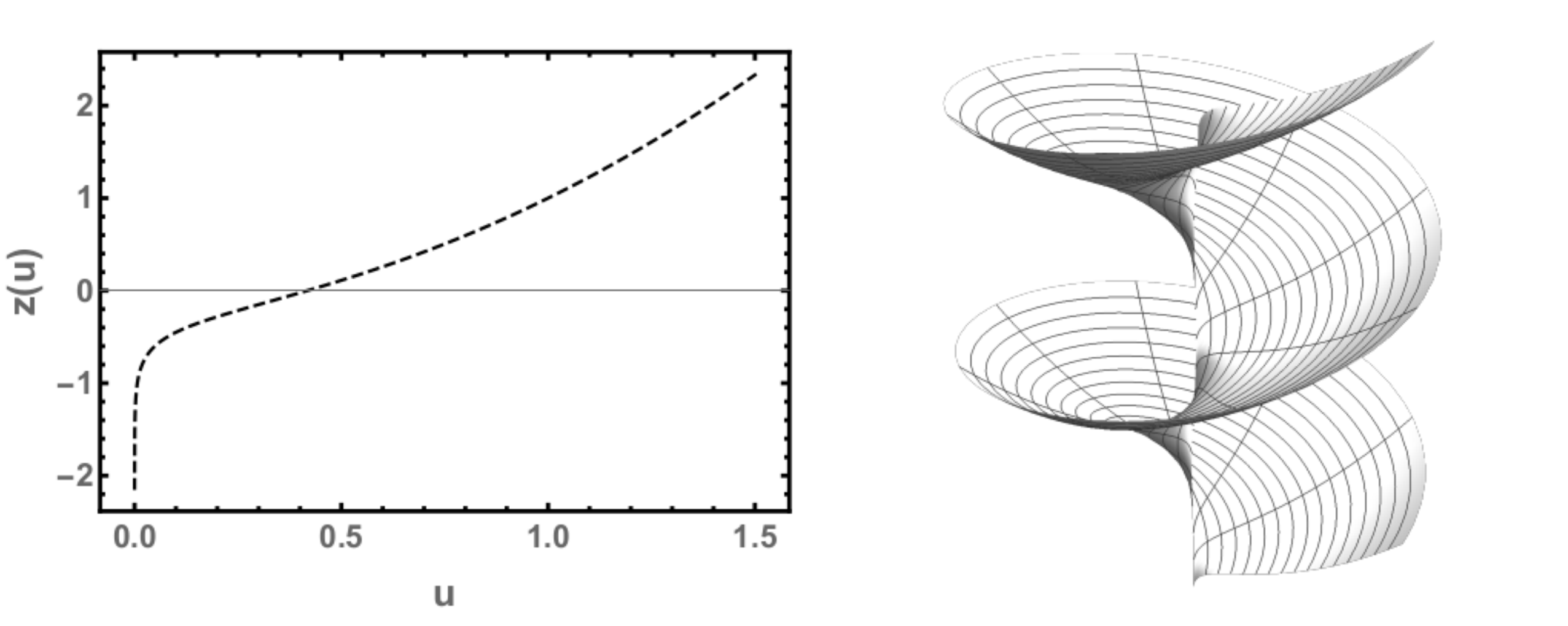}
    \caption{Helicoidal surfaces with harmonic minimal Gauss map $\mathbf{N}_m$ according to Theorem \ref{Thr::FiniteTypeTopViewGaussMapH}. These surfaces have constant mean curvature. (Left) Plot of $z(u)=z_0+z_1u^2+z_2\ln u$; (Right) Plot of the corresponding helicoidal surface. (In the figure, $u\in(0,\frac{3}{2})$, $t\in(0, 4\pi)$, $c=1$, $z_0=0$, $z_1=1$, and $z_2=\frac{1}{4}$.)}
    \label{fig:CMChelSurf}
%
    \includegraphics[width=0.8\linewidth]{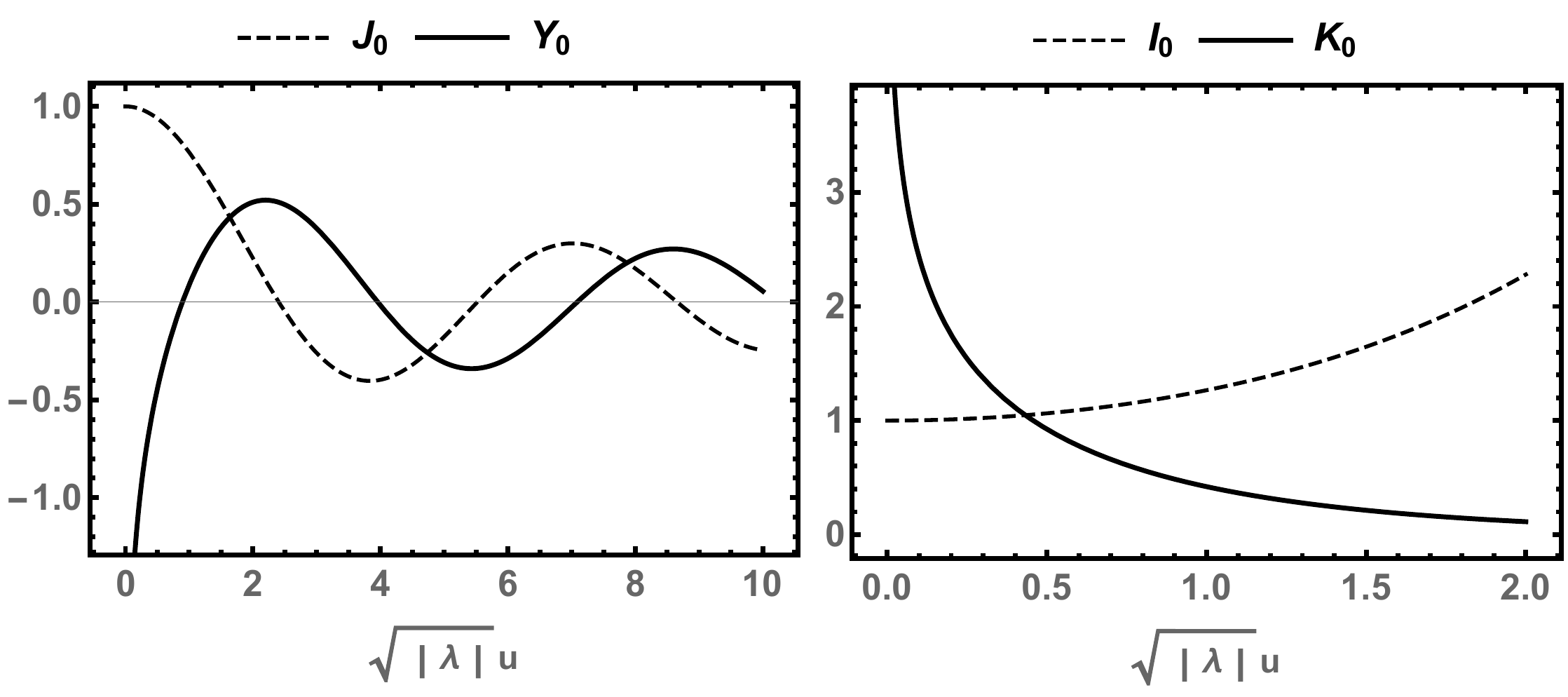}
    \caption{Curves leading to revolution surfaces with coordinate finite-type  minimal normal $\mathbf{N}_m$ according to Theorem \ref{Thr::FiniteTypeTopViewGaussMapH}. (Left) Zero order Bessel functions of the first and second type $J_0$ and $Y_0$, respectively, whose  corresponding revolution surfaces have $\lambda>0$; (Right) Zero order Bessel functions of the third and fourth type $I_0$ and $K_0$, respectively, whose  corresponding revolution surfaces have $\lambda<0$.}
    \label{fig:BesselFunctions}
\end{figure}

\begin{theorem}\label{Thr::FiniteTypeTopViewGaussMapH}
Let $M_c^2$ be a helicoidal surface with coordinate finite-type minimal Gauss map $\mathbf{N}_m$. Then, $M_c^2$ belongs to one of the following families: 
\begin{enumerate}[(1)]
    \item If $c\not=0$, then $\lambda_1=\lambda_2=0$ and
    \begin{equation}
        z(u)=z_0+z_1u^2+z_2\ln u,
    \end{equation}
    where $z_i$ is a constant $(i=0,1,2)$. (See Fig. \ref{fig:CMChelSurf}.)
\item If $c=0$, we have one of the following three cases below
\begin{enumerate}[(a)]  
\item If $\lambda_1=\lambda_2=0$, then 
\begin{equation}
        z(u)=z_0+z_1u^2+z_2\ln u,
    \end{equation}
where $z_i$ is a constant $(i=0,1,2)$.
\item If $\lambda_1=\lambda_2=\lambda\not=0$, then
\begin{equation} \label{hsz}
    z(u) = \left\{
    \begin{array}{ccc}
        z_0+z_1 J_0(\sqrt{\lambda}\,u)+z_2 Y_0(\sqrt{\lambda}\,u) & \mbox{ if } &  \lambda>0\\[4pt]
        z_0+z_1 I_0(\sqrt{-\lambda}\,u)+z_2 K_0(\sqrt{-\lambda}\,u) & \mbox{ if }& \lambda<0\\ 
    \end{array}
    \right., 
\end{equation}
where $z_i$ is a constant $(i=0,1,2)$ and $J_0$, $Y_0$, $I_0$, and $K_0$ are the zero order Bessel functions of the first, second, third, and fourth type, respectively. (See Fig. \ref{fig:BesselFunctions}.)
\item If $\lambda_1\not=\lambda_2$, then $z(u)$ is a constant function. 
\end{enumerate}
\end{enumerate}
\end{theorem}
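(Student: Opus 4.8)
The plan is to analyze the system \eqref{Eq::EigenvProbLamb1HSurf} case by case according to whether $c$ vanishes and whether the eigenvalues $\lambda_1,\lambda_2$ agree, and in each case to solve the resulting linear third-order ODE in $z'$. The first equation $\lambda_i c = 0$ immediately splits the analysis: if $c \neq 0$, it forces $\lambda_1 = \lambda_2 = 0$, whereas if $c = 0$ the eigenvalues are unconstrained by that relation. In every branch the substitution $w = z'$ reduces the second line of \eqref{Eq::EigenvProbLamb1HSurf} to the second-order linear ODE $u^2 w'' + u w' - (1-\lambda_i u^2) w = 0$, so the real work is recognizing this as a classical equation.

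First I would treat the case $\lambda_i = 0$ (which covers $c \neq 0$ and subcase (a)). Then the equation becomes $u^2 w'' + u w' - w = 0$, an equidimensional (Euler) equation with indicial roots $\pm 1$, giving $w = z' = 2z_1 u + z_2/u$ and hence $z(u) = z_0 + z_1 u^2 + z_2 \ln u$ after one integration; this is exactly families (1) and (2)(a). For the verification that these surfaces have constant mean curvature, one just substitutes into \eqref{eq::KandHhelicoidalSurfaces}: with $z' = 2z_1 u + z_2/u$ one computes $z' + u z'' = 4 z_1 u$, so $H = 2z_1$ is constant.

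Next, for $c = 0$ and $\lambda_1 = \lambda_2 = \lambda \neq 0$ (subcase (b)), the ODE $u^2 w'' + u w' + (\lambda u^2 - 1)w = 0$ is Bessel's equation of order one in the variable $\sqrt{\lambda}\,u$ (for $\lambda > 0$), or the modified Bessel equation of order one in $\sqrt{-\lambda}\,u$ (for $\lambda < 0$). Hence $w = z'$ is a combination of $J_1, Y_1$ (resp.\ $I_1, K_1$), and integrating using the standard antiderivative identities $\int J_1 = -J_0$, $\int Y_1 = -Y_0$, $\int I_1 = I_0$, $\int K_1 = -K_0$ (up to scaling by $\sqrt{|\lambda|}$) yields the stated formula \eqref{hsz} for $z(u)$, with the constant of integration absorbed into $z_0$. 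Finally, for $c = 0$ and $\lambda_1 \neq \lambda_2$, the two copies of the ODE with different $\lambda_i$ must hold simultaneously for the \emph{same} function $z$; subtracting them gives $(\lambda_1 - \lambda_2) u^2 z' = 0$, so $z' \equiv 0$ and $z$ is constant, which is case (2)(c).

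The main obstacle is not any single computation but bookkeeping: one must be careful that in the coordinate finite-type equation the eigenvalues $\lambda_1$ and $\lambda_2$ attached to $N_m^1$ and $N_m^2$ are \emph{a priori} independent, so the linear independence of $\{\cos t, \sin t\}$ used to pass from the two displayed equations to \eqref{Eq::EigenvProbLamb1HSurf} must be invoked with care before the cross term $\lambda_i c = 0$ is read off. One should also note that the ``degenerate'' subcase $c=0$ with $\lambda_1 \ne \lambda_2$ really is forced to be trivial, and check that the constant-$z$ solution is consistent (it gives a plane, which indeed has vanishing, hence coordinate finite-type, minimal normal). Apart from that, identifying the reduced ODE with the Euler and Bessel equations is the one conceptual step; everything else is routine integration.
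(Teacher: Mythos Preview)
Your proposal is correct and follows essentially the same route as the paper: both reduce \eqref{Eq::EigenvProbLamb1HSurf} via $w=z'$ to a second-order ODE, recognize the $\lambda=0$ case as an Euler equation and the $\lambda\neq0$ case as the (modified) Bessel equation of order one, integrate using $\int J_1=-J_0$ etc., and dispose of $\lambda_1\neq\lambda_2$ by subtraction. Your extra remarks (the explicit indicial-root computation, the constant mean curvature check, and the bookkeeping caveat about the linear independence of $\{\cos t,\sin t\}$) are welcome additions but do not change the underlying argument.
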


\begin{proof}

\textit{Case (1):} From Eq. \eqref{Eq::EigenvProbLamb1HSurf}, it is immediate to see that if $c\not=0$, then $\lambda_1=\lambda_2=0$. Therefore, the corresponding solution for $z$ is 
\begin{equation}
z(u) = z_0+z_1u^2+z_2 \ln u
\end{equation}
for some constants $z_0,z_1,z_2$. (See Fig. \ref{fig:CMChelSurf}.)

Now, let us assume that $c=0$. We have to consider the three possibilities for the values of $\lambda_1$ and $\lambda_2$ as follows:

\textit{Case (2.a):} If $\lambda_1=\lambda_2=0$, then the solution is the same as in case (1).

\textit{Case (2.b):} If $\lambda_1,\lambda_2= \lambda \not=0$, then the eigenvalue problems become
\begin{equation}\label{Eq::EigenvProbLamb1and2NotZeroHSurf}
u^2 z'''+uz''-(1-\lambda u^2)z^{\prime}=0.
\end{equation}
Now, by defining $f=z'$, the above equation turns into the following ODE
\begin{equation}\label{Eq::EigenvProbLamb1and2NotZeroHSurfb}
u^2 f''+u f'-(1-\lambda u^2)f=0.
\end{equation}
Now, to solve this equation we consider on two cases according to the sign of $\lambda$ as follows:
\begin{itemize}
\item First case, let $\lambda >0$. Then, by taking  $v(u)=\sqrt{\lambda}\,u$ in  Eq. \eqref{Eq::EigenvProbLamb1and2NotZeroHSurfb}, we are led to the Bessel ODE of first order
\begin{equation*}
    v^2 f''(v)+v f'(v)-(1-v^2)f(v)=0,
\end{equation*}
whose solution is \cite{Olver1972}
\begin{equation}
    f(v)=c_1 J_1(v)+c_2 Y_1(v).
\end{equation}
By integrating the first and the second kind Bessel functions of order 1 and by also considering $v(u)=\sqrt{\lambda}\,u$ and $z'=f$, we get the solution of Eq. (\ref{Eq::EigenvProbLamb1and2NotZeroHSurf}) as
$$z(u)=z_0+z_1 J_0(\sqrt{\lambda}\,u)+z_2 Y_0(\sqrt{\lambda}\,u),$$ where $z_i=\mbox{constant}$.

\item Now, let $\lambda <0$. As in the previous case,  by taking  $v(u)=\sqrt{-\lambda}\,u$ in  \eqref{Eq::EigenvProbLamb1and2NotZeroHSurfb}, we are led to the modified Bessel ODE of first order
\begin{equation*}
    v^2 f''(v)+v f'(v)-(1+v^2)f(v)=0,
\end{equation*}
whose solution is \cite{Olver1972}
\begin{equation*}
    f(v)=c_1 I_1(v)+c_2 K_1(v).
\end{equation*}
By considering $f=z'$, $v(u)=\sqrt{-\lambda}\,u$ and the Bessel functions, we get the solution of Eq. (\ref{Eq::EigenvProbLamb1and2NotZeroHSurf}) as 
$z(u)=z_0+z_1 I_0(\sqrt{-\lambda}\,u)+z_2 K_0(\sqrt{-\lambda}\,u)$, where $z_i=\mbox{constant}$.
\end{itemize}

\textit{Case (2.c):} If $\lambda_1\not=\lambda_2$, then subtracting $u^2z'''+uz''-(1-\lambda_1u^2)z'=0$ from $u^2z'''+uz''-(1-\lambda_2u^2)z'=0$ gives
\begin{equation}
(\lambda_1-\lambda_2) u^2 z^{\prime}=0.
\end{equation}
Hence, since $\lambda_1-\lambda_2\not=0$, we get $z(u)=z_0$ constant.
\end{proof}

\subsection{Helicoidal surfaces with coordinate finite-type minimal normal with prescribed boundary conditions}

Notice that if the minimal normal is harmonic, $\Delta \mathbf{N}_m=0$, then it follows from Eq. \eqref{eq::KandHhelicoidalSurfaces} that the corresponding helicoidal surface has constant mean curvature $H=2z_1$, while the Gaussian curvature is $K=4z_1^2-\frac{1}{u^4}(c^2+z_2^2)$. Then, by approaching the screw axis, i.e., $u\to0$, we have $K\sim -\frac{1}{u^4}\to-\infty$ while away from it, i.e., $u\gg1$, $K\sim 4z_1^2$. Asymptotically, we have $H^2-K\sim 0$ for $u\gg1$ and consequently, $M_c^2$ should behave as a totally umbilical surface: as a plane if $M_c^2$ is minimal, i.e., if $z_1=0$, or as a sphere of parabolic type if otherwise. In fact, in terms of the position vector $\mathbf{R}(u,t)=(u\cos t,u\sin t,z_0+z_1u^2+z_2\ln u+ct)$, 
$$u\gg1,\,t\in(0,2\pi)\Rightarrow M_c^2\sim\{(x,y,z):z=\frac{1}{2p}(x^2+y^2)\},\mbox{ where }p=\frac{1}{H}.$$

On the other hand, assuming $\lambda_1=\lambda_2=\lambda\not=0$, i.e., $\mathbf{N}_m$ is not harmonic, we must have $c=0$ and we may use the known expressions for the asymptotic behavior of Bessel functions \cite{Olver1972} to deduce that near the revolution axis, \linebreak $0<u\ll1$, we have $z(u)\sim w_0+w_1\ln(\sqrt{\vert\lambda\vert}\,u)$ for some constants $w_0,w_1$: $w_1=2z_2/\pi$ if $\lambda>0$ and $w_1=-z_2$ if $\lambda<0$. Finally, far from the revolution axis, $u\gg1$, we have (See Fig. \ref{fig:BesselFunctions})
$$z(u)\sim\left\{
    \begin{array}{ccc}
        z_0+z_1\sqrt{\frac{2}{\pi\sqrt{\lambda}u}}\cos(\sqrt{\lambda}u-\frac{\pi}{4})+z_2\sqrt{\frac{2}{\pi\sqrt{\lambda}u}}\sin(\sqrt{\lambda}u-\frac{\pi}{4}) & \mbox{ if } &  \lambda>0\\[5pt]
        z_0+z_1\frac{1}{\sqrt{2\pi\sqrt{-\lambda}u}}\mathrm{e}^{\sqrt{-\lambda}u}+z_2\sqrt{\frac{\pi}{2\sqrt{-\lambda}u}}\mathrm{e}^{-\sqrt{-\lambda}u} & \mbox{ if }& \lambda<0\\ 
    \end{array}
    \right..$$
    
From the expressions above, we conclude that
\begin{proposition}
Let $M_c^2$ be a helicoidal surface with coordinate finite-type minimal Gauss map $\mathbf{N}_m$. Assume that $M_c^2$ is not a plane. We have
\begin{enumerate}[(1)]
    \item If $M_c^2$ is bounded near the screw axis, $0<u\ll1$, then  $\lambda_1=\lambda_2=\lambda$ and
    \begin{equation}\label{eq::HelMinNormalBoundedNearAxis}
    z(u) = \left\{
    \begin{array}{ccc}
        z_0+z_1 J_0(\sqrt{\lambda}\,u) & \mbox{ if } &  \lambda>0\\[2pt]
        z_0+z_1u^2 & \mbox{ if } &  \lambda=0\\[2pt]
        z_0+z_1 I_0(\sqrt{-\lambda}\,u) & \mbox{ if }& \lambda<0\\ 
    \end{array}
    \right., 
\end{equation}
    where $z_i$ is a constant $(i=0,1)$ and $J_0$ and $I_0$ are the zero order Bessel functions of the first and third type, respectively. (If $\lambda\not=0$, then $c=0$.)
\item If $M_c^2$ is bounded at infinity, $u\gg1$, then $\lambda_1=\lambda_2=\lambda$ and
\begin{equation}
    z(u) = \left\{
    \begin{array}{ccc}
        z_0+z_1 J_0(\sqrt{\lambda}\,u)+z_2 Y_0(\sqrt{\lambda}\,u) & \mbox{ if } &  \lambda>0\\[2pt]
        z_0+z_2 K_0(\sqrt{-\lambda}\,u) & \mbox{ if }& \lambda<0\\ 
    \end{array}
    \right., 
\end{equation}
where $z_i$ is a constant $(i=0,1,2)$ and $J_0$, $Y_0$, and $K_0$ are the zero order Bessel functions of the first, second, and fourth type, respectively.
\item If $M_c^2$ is bounded near the axis and at infinity, then $\lambda_1=\lambda_2=\lambda>0$ and
\begin{equation}
    z(u) = z_0+z_1 J_0(\sqrt{\lambda}\,u), 
\end{equation}
where $z_i$ is a constant $(i=0,1)$ and $J_0$ is the zero order Bessel function of the first type.
\end{enumerate}
\end{proposition}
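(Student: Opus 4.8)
The plan is to derive this as a corollary of the classification in Theorem~\ref{Thr::FiniteTypeTopViewGaussMapH}, by filtering the families found there against the boundedness hypotheses and reading off which integration constants must vanish from the known asymptotics of the special functions that appear.

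First I would observe that, since $M_c^2$ is not a plane, its generating curve is not a line, so $z$ is non-constant; this rules out case (2.c) of Theorem~\ref{Thr::FiniteTypeTopViewGaussMapH} and forces $\lambda_1=\lambda_2=:\lambda$. Hence $z$ is one of the three explicit profiles: $z=z_0+z_1u^2+z_2\ln u$ for $\lambda=0$ (with $c$ unconstrained), $z=z_0+z_1J_0(\sqrt{\lambda}\,u)+z_2Y_0(\sqrt{\lambda}\,u)$ for $\lambda>0$, and $z=z_0+z_1I_0(\sqrt{-\lambda}\,u)+z_2K_0(\sqrt{-\lambda}\,u)$ for $\lambda<0$, the latter two forcing $c=0$. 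Since the top-view component $(u\cos t,u\sin t)$ is automatically controlled on any bounded $t$-interval, ``bounded'' reduces to the requirement that $z(u)$ stay finite over the relevant range of $u$.

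Next I would invoke the asymptotic expansions already recalled just before the statement. As $u\to0^+$, the terms $u^2$, $J_0(\sqrt{\lambda}\,u)$, and $I_0(\sqrt{-\lambda}\,u)$ stay bounded, whereas $\ln u$, $Y_0(\sqrt{\lambda}\,u)$, and $K_0(\sqrt{-\lambda}\,u)$ diverge; as $u\to\infty$, the terms $J_0$, $Y_0$, and $K_0$ decay to $0$, whereas $u^2$, $\ln u$, and $I_0$ diverge. With this dictionary, each item is a short deduction. For (1), boundedness near the axis forces $z_2=0$ in each family, leaving $z=z_0+z_1u^2$ when $\lambda=0$ (necessarily $z_1\neq0$, else a plane) and the stated $J_0$- or $I_0$-profiles otherwise. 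For (2), boundedness at infinity forces $z_1=z_2=0$ when $\lambda=0$, i.e.\ a plane, which the hypothesis excludes, so $\lambda\neq0$; it imposes nothing new when $\lambda>0$, and kills the $I_0$-term when $\lambda<0$. For (3), part (2) already gives $\lambda\neq0$, and $\lambda<0$ is then impossible, since near-axis boundedness would additionally kill the surviving $K_0$-term and leave a plane; hence $\lambda>0$ and only the $J_0$-term survives.

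The calculation carries no genuine difficulty beyond careful bookkeeping; the point that most requires attention---and the closest thing to an obstacle---is to track which of the four Bessel-type functions is singular at the origin and which at infinity, and to check in each branch that discarding a term does not silently collapse $z$ to a constant. It is exactly the hypothesis ``$M_c^2$ is not a plane'' that disposes of those degenerate sub-cases. It is also worth remarking that in the $\lambda=0$ family the constant $c$ remains free, so part (1) contains genuinely helicoidal---not merely rotational---examples, whereas $\lambda\neq0$ already forced $c=0$ in Theorem~\ref{Thr::FiniteTypeTopViewGaussMapH}.
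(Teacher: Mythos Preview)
Your proposal is correct and follows essentially the same approach as the paper: the paper derives the proposition directly from Theorem~\ref{Thr::FiniteTypeTopViewGaussMapH} together with the asymptotic expansions of $\ln u$, $u^2$, $J_0$, $Y_0$, $I_0$, $K_0$ near $u=0$ and as $u\to\infty$ spelled out in the paragraphs immediately preceding the statement, and then simply reads off which constants must vanish. Your write-up is a bit more explicit in ruling out case~(2.c) via the non-plane hypothesis and in noting that $c$ remains free when $\lambda=0$, but the logic is the same.
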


Other common boundary conditions to impose are homogeneous or periodic conditions, i.e., $z(a)=0=z(a+L)$ or $z(a)=z(a+nL)$, $\forall\,n\in\mathbb{N}$, for given parameters $a\geq0$ and $L>0$, respectively. We are not going to exactly solve these boundary conditions problems here, but notice that from the general solutions in terms of $\{1,u^2,\ln u\}$ when $\lambda=0$ or in terms of Bessel functions $J_0,Y_0,I_0$, and $K_0$, when $\lambda\not=0$, we can see that in order for $z(u)$ to satisfy the given boundary conditions mentioned above we should necessarily have $\lambda>0$. This also implies that $c=0$ and, consequently, the associated surface is a surface of (Euclidean) revolution around the $z$-axis.

To finish the analysis of helicoidal surfaces with coordinate finite-type minimal normal, let us impose mixed boundary conditions. 

\begin{proposition}
Let $M_c^2$ be a helicoidal surface with coordinate finite-type minimal Gauss map $\mathbf{N}_m$ and generating curve $\alpha(u)=(u,0,z(u))$. Assume that $M_c^2$ is not a plane. If $M_c^2$ is bounded near the screw axis, $0<u\ll1$, and $z(L)=0$ for a given $L>0$, then up to translations along the $z$-direction
    \begin{equation}
    z(u) = z_1 J_0(\sqrt{\lambda_n}\,u),\,\lambda_n = \frac{u_n^2}{L^2}, 
\end{equation}
    where $z_1$ is a constant, $J_0$ is the zero order Bessel function of the first kind, and $0=u_0<u_1<\dots<u_n\stackrel{n}{\to}\infty$ are the zeros of $J_0$.
\end{proposition}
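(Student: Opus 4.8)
The plan is to combine the previous Proposition (which already classified the bounded-near-axis case) with the single extra boundary condition $z(L)=0$. First I would invoke part (1) of the preceding Proposition: since $M_c^2$ is assumed to be of coordinate finite-type minimal normal, is not a plane, and is bounded near the screw axis $0<u\ll 1$, we are forced into $\lambda_1=\lambda_2=\lambda$ with the three sub-cases of Eq. \eqref{eq::HelMinNormalBoundedNearAxis}: $z=z_0+z_1J_0(\sqrt{\lambda}u)$ for $\lambda>0$, $z=z_0+z_1u^2$ for $\lambda=0$, or $z=z_0+z_1I_0(\sqrt{-\lambda}u)$ for $\lambda<0$, and in the non-zero cases $c=0$. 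So it only remains to examine which of these can also satisfy $z(L)=0$ for a prescribed $L>0$ in a way that does not degenerate to a plane.

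Next I would eliminate the cases $\lambda\le 0$. For $\lambda=0$ we have $z(u)=z_0+z_1u^2$; imposing $z(L)=0$ just fixes $z_0=-z_1L^2$, which by itself does not rule the case out — so here I would note that this solution is a paraboloid-type revolution surface and, to get the clean Bessel conclusion stated, one should observe that the genuinely ``oscillatory'' bounded profiles are the Bessel ones; more carefully, the intended reading is that up to a $z$-translation (which removes $z_0$) we need $z(L)=0$ with $z$ not identically forcing a plane. Actually the cleanest route: translate along $z$ so that $z_0=0$. Then $z(u)=z_1 J_0(\sqrt{\lambda}u)$, $z_1u^2$, or $z_1 I_0(\sqrt{-\lambda}u)$. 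Since $I_0>0$ and $u^2>0$ for $u>0$, the conditions $z(L)=0$ with $L>0$ force $z_1=0$ in those two cases, i.e. $z\equiv 0$, a plane — contradicting the hypothesis. Hence only $\lambda>0$ survives, with $z(u)=z_1 J_0(\sqrt{\lambda}u)$ after the translation.

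Finally I would impose $z(L)=0$ on the surviving family: $z_1 J_0(\sqrt{\lambda}L)=0$ with $z_1\neq 0$ (else plane again), so $\sqrt{\lambda}L$ must be a zero of $J_0$. Writing $0<u_1<u_2<\cdots$ for the positive zeros of $J_0$ (with the convention $u_0=0$, $u_n\to\infty$ as recalled from \cite{Olver1972}), this gives $\sqrt{\lambda}L=u_n$, i.e. $\lambda=\lambda_n=u_n^2/L^2$, which is exactly the asserted quantization. That yields $z(u)=z_1 J_0(\sqrt{\lambda_n}u)$ as claimed.

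The only real subtlety — and the step I would be most careful about — is the phrase ``up to translations along the $z$-direction'': without it the constant $z_0$ cannot be discarded and the cases $\lambda\le 0$ genuinely admit solutions satisfying $z(L)=0$, so the normalization must be made explicit before the positivity argument for $I_0$ and $u^2$ can be used to kill those branches. Everything else is a direct application of the previous Proposition plus the elementary facts that $I_0$ and $u\mapsto u^2$ are strictly positive on $(0,\infty)$ while $J_0$ has a discrete increasing sequence of positive zeros accumulating only at infinity.
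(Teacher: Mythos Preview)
Your proposal is correct and follows essentially the same approach as the paper: invoke the bounded-near-axis classification from the preceding Proposition, normalize $z_0=0$ via a $z$-translation, use positivity of $u^2$ and $I_0$ on $(0,\infty)$ to eliminate $\lambda\le 0$ (the paper states this step more tersely as ``since we should have $z(L)=0$, we see that $\lambda>0$''), and then quantize $\lambda$ through the zeros of $J_0$. Your explicit remark about why the translation must precede the positivity argument is a helpful clarification of a point the paper leaves implicit.
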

\begin{proof}
Since we are demanding the solution $z(u)$ to be bounded near $u=0$, the function $z(u)$ has the form given in Eq. \eqref{eq::HelMinNormalBoundedNearAxis}. For simplicity, we may set $z_0=0$ and, therefore, we should also assume that $z_1\not=0$. (Geometrically, $z_0$ is associated with a translation of $M_c^2$ along the $z$-direction.) Since we should have $z(L)=0$, we see that $\lambda>0$. Finally, this boundary condition leads to
\begin{equation}
    z(L)=z_1J_0(\sqrt{\lambda}L)=0\Leftrightarrow \sqrt{\lambda}L=u_n,
\end{equation}
where $0=u_0<u_1<\dots<u_n\stackrel{n}{\to}\infty$ are the zeros of $J_0$, see \cite{Olver1972}.
\end{proof}

\subsection{Helicoidal surfaces with coordinate finite-type parabolic Gauss map}

On Theorem \ref{Thr::FiniteTypeTopViewGaussMapH}, we investigated the eigenvalue problem for $\mathbf{N}_m$. The problem for the third coordinate of $\mathbf{N}_m$ is trivial, but the solutions for the first two coordinates can be applied to the parabolic Gauss map $\mathbf{G}$. The strategy now consists in checking the compatibility of the solution for the first two coordinates $G^1$ and $G^2$ with the last one $G^3$. 

The Laplacian of a helicoidal surface given in Eq. (\ref{laph}), when applied to $G^3$, in the last coordinate in Eq. (\ref{pgmh}), gives
\begin{eqnarray}
\Delta_g G^3 & = & - \frac{2c^2}{u^4}-\frac{z'z''}{u}-(z'z'')^{\prime}. \label{eq::LapCoord3ParabNormalOfHSurf}
\end{eqnarray}
Notice that for $\lambda_1=0$ or $\lambda_2=0$, but not $\lambda_1=\lambda_2=0$, from \eqref{Eq::EigenvProbLamb1HSurf} we necessarily have $c=0$ and $z(u)=z_0$ constant. Then, the parabolic Gauss map \eqref{pgmh} is $\mathbf{G}=(0,0,\frac{1}{2})$ and it follows that in order to satisfy $-\Delta (G^1,G^2,G^3)=(\lambda_1G^1,\lambda_2G^2,\lambda_3G^3)$ we must have $\lambda_3=0$. (Here, the arbitrariness of $\lambda_1$ or $\lambda_2$ comes from the fact that the corresponding coordinates of $\mathbf{G}$ vanish identically.) In the following theorem we shall only consider the cases where $\lambda_1=\lambda_2$.

\begin{theorem} \label{Thr::FiniteTypeParabGaussMapHSurf}
Let $M_c^2$ be a helicoidal surface given by Eq. \eqref{hs1} whose top-view projection of the parabolic Gauss map $\mathbf{G}$ is of coordinate finite-type as described in Theorem \ref{Thr::FiniteTypeTopViewGaussMapH} with $\lambda_1=\lambda_2$. In addition, if the third coordinate of $\mathbf{G}$ is an eigenfunction of the Laplacian, $-\Delta_gG^3=\lambda_3G^3$, then 
$M_c^2$ is a piece of a plane.
\end{theorem}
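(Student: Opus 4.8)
The plan is to combine the classification of Theorem~\ref{Thr::FiniteTypeTopViewGaussMapH} (restricted to the cases $\lambda_1=\lambda_2=\lambda$) with the eigenvalue equation $-\Delta_gG^3=\lambda_3G^3$ and show that the only consistent possibility forces $z'\equiv 0$ together with $c=0$, i.e.\ $\mathbf{R}(u,t)=(u\cos t,u\sin t,z_0)$, a piece of a plane. Concretely, by Theorem~\ref{Thr::FiniteTypeTopViewGaussMapH} there are exactly two subcases to treat: (i) $\lambda=0$, where $z(u)=z_0+z_1u^2+z_2\ln u$ and $c$ is arbitrary; and (ii) $\lambda\neq0$, where $c=0$ and $z'$ is a linear combination of first-order Bessel functions ($J_1,Y_1$ if $\lambda>0$, $I_1,K_1$ if $\lambda<0$). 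In each subcase I would substitute the known form of $z$ into the expression \eqref{eq::LapCoord3ParabNormalOfHSurf} for $\Delta_gG^3$ and into $G^3=\tfrac12(1-c^2/u^2-z'^2)$ from \eqref{pgmh}, and impose $-\Delta_gG^3=\lambda_3G^3$ identically in $u$.

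For subcase (i), first I would compute, using $z'=2z_1u+z_2/u$ and $z''=2z_1-z_2/u^2$, the quantity $z'z''=4z_1^2u+2z_1z_2/u-z_2^2/u^3$, hence $(z'z'')'=4z_1^2-2z_1z_2/u^2+3z_2^2/u^4$ and $z'z''/u=4z_1^2+2z_1z_2/u^2-z_2^2/u^4$. Plugging these into \eqref{eq::LapCoord3ParabNormalOfHSurf} gives $\Delta_gG^3=-8z_1^2-2c^2/u^4-2z_2^2/u^4$, a specific rational function of $u$. On the other hand $-\lambda_3G^3=-\tfrac{\lambda_3}{2}\bigl(1-c^2/u^2-(2z_1u+z_2/u)^2\bigr)$ is another rational function of $u$ whose principal-part structure in powers of $u$ (terms in $u^2$, $u^0$, $u^{-2}$) must match $\Delta_gG^3$ (terms in $u^0$, $u^{-4}$). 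Comparing coefficients of the independent monomials $u^2,u^0,u^{-2},u^{-4}$ forces $\lambda_3 z_1=0$, $\lambda_3(1-z_2^2\cdot 0\ldots)$-type relations, and in particular the $u^{-4}$ term $-2(c^2+z_2^2)=0$ has no counterpart on the right unless $\lambda_3=0$ and then $c^2+z_2^2=0$, so $c=0=z_2$; subsequently the $u^0$ and $u^2$ comparisons force $z_1=0$. This leaves $z(u)=z_0$, a plane.

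For subcase (ii) I would argue that since $z'=f$ solves the (modified) Bessel equation $u^2f''+uf'-(1\mp\cdots)f=0$ of \eqref{Eq::EigenvProbLamb1and2NotZeroHSurfb}, one has the clean reduction $z'z'''=(z'z'')'-z''^2$ and, using the ODE to eliminate $z'''$, the right-hand side of \eqref{eq::LapCoord3ParabNormalOfHSurf} with $c=0$ becomes an explicit expression quadratic in $f=z'$ and $f'=z''$; similarly $G^3=\tfrac12(1-f^2)$. The eigenvalue equation then reads as a polynomial identity in $f$ and $f'$ with $u$-dependent coefficients, and since $f,f'$ cannot both be constant unless $f\equiv0$ (Bessel functions are non-polynomial and their only constant value is zero), matching forces $f\equiv0$, hence $z$ constant and again a plane; but with $z$ constant and $\lambda\neq0$ contradicting $z''\neq0$, so in fact this subcase collapses entirely to $z$ constant with $\lambda=0$, already covered.

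The main obstacle I anticipate is bookkeeping rather than conceptual: in subcase (ii) one must handle the asymptotic/structural fact that a nonzero solution of the first-order Bessel ODE is genuinely transcendental, so that a relation of the form $P(u)f^2+Q(u)ff'+R(u)f'^2+S(u)=0$ with rational $P,Q,R,S$ can hold identically only trivially. Making this rigorous may require either invoking the Wronskian/linear independence of $\{J_1,Y_1\}$ (resp.\ $\{I_1,K_1\}$) or differentiating the candidate identity once more and using the ODE repeatedly to derive a contradiction with the second-order linear nature of the equation. In subcase (i) the obstacle is merely organizing the coefficient comparison across the four monomials $u^{2},u^{0},u^{-2},u^{-4}$, which is routine. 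I would present subcase (i) in full coefficient-matching detail and dispatch subcase (ii) by the linear-independence argument, then conclude that in all cases $z'\equiv0$ and $c=0$, so $M_c^2$ is an open piece of the plane $z=z_0$.
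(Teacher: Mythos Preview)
Your case split and your treatment of subcase (i) ($\lambda=0$) match the paper's almost exactly: the paper also substitutes $z=z_0+z_1u^2+z_2\ln u$, compares powers of $u$, and obtains $c^2+z_2^2=0$ followed by $z_1=\lambda_3=0$. (Your intermediate expression $z'z''=4z_1^2u+2z_1z_2/u-z_2^2/u^3$ is off---the cross terms cancel, giving $4z_1^2u-z_2^2/u^3$---but the slip is harmless because the spurious terms cancel again in $(z'z'')'+z'z''/u$.) One small logical misstep: the $u^{-4}$ coefficient forces $c^2+z_2^2=0$ regardless of $\lambda_3$, not ``unless $\lambda_3=0$''.

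For subcase (ii) ($\lambda\neq0$, $c=0$) the paper takes a different and cleaner route. Rather than viewing the eigenvalue equation as a quadratic constraint on $f=z'$ (which, after using the Bessel ODE for $z'''$, is $f'^2+\bigl(1/u^2-\lambda+\lambda_3/2\bigr)f^2=\lambda_3/2$), the paper introduces $g=\tfrac12(z'^2-1)$ and observes that the equation becomes \emph{linear} in $g$: $ug''+g'+\lambda_3 ug=0$. This is a zero-order Bessel equation (or $(ug')'=0$ when $\lambda_3=0$), so $g$ is explicitly a combination of $J_0,Y_0$ (resp.\ $I_0,K_0$, or $\{1,\ln u\}$). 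The contradiction then comes from comparing $z'^2=2g+1$ with the known first-order Bessel form of $z'$; for $\lambda>0$ the paper notes $z'^2\to0$ like $1/u$ whereas $2g+1\to1$, which is impossible unless all Bessel coefficients vanish. Your suggested tools---the Wronskian or linear independence of $\{J_1,Y_1\}$---do not directly apply, since the relation is quadratic in $(f,f')$, not linear. If you wish to keep your formulation, the decisive argument is the same asymptotic one: for $\lambda>0$, $f^2,f'^2=O(1/u)$, so the left side of your quadratic identity tends to $0$, forcing $\lambda_3=0$; then $f'^2+(1/u^2-\lambda)f^2=0$, and at any zero of the oscillatory $f$ this gives $f'=0$, hence $f\equiv0$ by uniqueness for the second-order ODE. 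For $\lambda<0$, positivity ($1/u^2-\lambda>0$) gives the same conclusion. Finally, your closing remark ``$z$ constant and $\lambda\neq0$ contradicting $z''\neq0$'' is misplaced: $z$ constant is the trivial Bessel solution for any $\lambda$, and that is precisely the planar conclusion you want.
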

\begin{proof} Now, notice that from \eqref{eq::LapCoord3ParabNormalOfHSurf}, we have 
\begin{eqnarray}
-\Delta G^3 & = & \frac{z'z''}{u}+(z'')^2+z'z'''+\frac{2c^2}{u^4} = \frac{z'z''+uz''^2+uz'z'''}{u}+\frac{2c^2}{u^4}\nonumber\\
& = & \frac{(uz'z'')'}{u}+\frac{2c^2}{u^4}= \frac{1}{2u}[u(z'^2)']'+\frac{2c^2}{u^4}.
\end{eqnarray}
By considering this in \eqref{pgmh}, the eigenvalue problem for $G^3$, i.e., $-\Delta G^3= \lambda_3 G^3$, can be rewritten in a more convenient form as follows
\begin{equation} \label{evcpgmh}
   \frac{1}{2u}[u(z'^2)']'+\frac{2c^2}{u^4}= \frac{\lambda_3 }{2}\Big(1-\frac{c^2}{u^2}-{z'}^2\Big).
\end{equation}
Now, defining $g=\frac{1}{2}(z'^2-1)$, we can rewrite this eigenvalue problem as
\begin{equation}\label{Eq::EffectiveEDO3rdCoordParNormalHelicoidal} 
    \frac{ug''+g'}{u}+\frac{2c^2}{u^4}=-\lambda_3g-\lambda_3\frac{c^2}{2u^2}\Rightarrow -ug''-g'-\lambda_3ug=\frac{\lambda_3c^2}{2u}+\frac{2c^2}{u^3}.
\end{equation}

We now divide the proof in two main cases: (1) when $\lambda=\lambda_1=\lambda_2=0$; and (2) $\lambda=\lambda_1=\lambda_2\not=0$.

\textit{Case (1)}: If $\lambda_1=\lambda_2=0$, then the solution of the second equation in Eq. \eqref{Eq::EigenvProbLamb1HSurf} is $z(u)=z_0+z_1u^2+z_2\ln u$. By considering this solution and $g=\frac{1}{2}(z'^2-1)$ in Eq. \eqref{Eq::EffectiveEDO3rdCoordParNormalHelicoidal} yields 
\begin{equation}
    ug''+g'+u\lambda_3g=2 \lambda_3u^3 z_1^2-u \left(\frac{\lambda_3}{2}-8z_1^2-2 \lambda_3z_1z_2\right)+\frac{\lambda_3z_2^2}{2 u}+\frac{2z_2^2}{u^3}.
\end{equation}
Comparison with the right-hand side of Eq. \eqref{Eq::EffectiveEDO3rdCoordParNormalHelicoidal} leads to 
\begin{equation}
    \left\{
    \begin{array}{c}
         \lambda_3z_1^2=0  \\
     \frac{\lambda_3}{2}-8z_1^2-2 \lambda_3z_1z_2=0 \\
     c^2+z_2^2=0\\
    \end{array}
    \right..
\end{equation}
From the last expression, we deduce that we must have $z_2=c=0$. Using this in the first and second expressions leads to $\lambda_3z_1^2=0=\lambda_3-16z_1^2$, from which we conclude that $\lambda_3=z_1=0$. In conclusion, the eigenproblems for $G^1$ and $G^2$ described in Cases (1) and (2.a) of Theorem \ref{Thr::FiniteTypeTopViewGaussMapH} put together with the problem $G^3$ lead to $\lambda_3=0$ and $z(u)=z_0$.

\textit{Case (2)}: If $\lambda_1=\lambda_2= \lambda \neq 0$, then from \eqref{Eq::EigenvProbLamb1HSurf} we have $c=0$. By considering this result in \eqref{Eq::EffectiveEDO3rdCoordParNormalHelicoidal}, we get
\begin{equation}\label{eq::Bessel3rdCoordfHelicoidal}
 ug''+g'+\lambda_3ug=0.
\end{equation}
If $\lambda_3=0$, then we must have
$ug''+g'=(ug')'=0$ whose solution is $g=a_2\ln u+a_1$. This solution, however, is only compatible with a general solution in terms of Bessel functions as in Case (2.b) of Theorem \ref{Thr::FiniteTypeTopViewGaussMapH} if $z_1=z_2=0$ and $a_1=a_2=0$. In other words, $z(u)=z_0$.

Now, if $\lambda_3\not=0$, we must solve Eq. \eqref{eq::Bessel3rdCoordfHelicoidal}, where $g=\frac{1}{2}(z'^2-1)$, and compare the corresponding solution for $z(u)$ with that of Case (2.b) of Theorem \ref{Thr::FiniteTypeTopViewGaussMapH}. Adopting the coordinate change $v=v(u)=\sqrt{\vert\lambda_3\vert}u$, the differential equation for $g=g(v)$ is
\begin{equation}
    v^2g''(v)+vg'(v)\pm v^2g(v)=0.
\end{equation}
Therefore, $g$ is a combination of the Bessel functions $\{J_0(\sqrt{\lambda_3}\,u),Y_0(\sqrt{\lambda_3}\,u)\}$ if $\lambda_3>0$ or $\{I_0(\sqrt{-\lambda_3}\,u),K_0(\sqrt{-\lambda_3}\,u)\}$ if $\lambda_3<0$. Notice, in addition, that $\lambda=\lambda_1=\lambda_2$ and $\lambda_3$ should have the same sign, otherwise we would have solutions for $z(u)$ involving functions of distinct types.

First, let us assume that $\lambda,\lambda_3>0$. From the solution for $z(u)$ given in \eqref{hsz} in terms of $\lambda>0$, we deduce
\begin{equation}\label{eq::3rdParGaussMapzprime}
    z'^2=z_1^2\lambda J_1^2(\sqrt{\lambda}u)+2z_1z_2\lambda J_1(\sqrt{\lambda}u)Y_1(\sqrt{\lambda}u)+z_2^2\lambda Y_1^2(\sqrt{\lambda}u).
\end{equation}
If there were values for $\lambda$ and $\lambda_3$ leading to compatible solutions, then we would be able to write the above expression in terms of the Bessel functions $J_0$ and $Y_0$ since we need that $g=(z'^2-1)/2$. However, this is not possible. For example, $J_1^2$ necessarily involves Bessel functions of other orders in addition to $J_0$. Indeed, from $1=J_0^2(u)+2\sum_{n=1}^{\infty}J_n^2(u)$ and $J_0(2u)=J_0^2(u)+2\sum_{n=1}^{\infty}(-1)^nJ_n^2(u)$ \cite{Olver1972}, we can write
$J_1^2(u)=\frac{1}{4}-\frac{1}{4}J_0(2u)-\sum_{n=1}^{\infty}J_{2n+1}^2(u)$. (The corresponding expressions for $Y_1(u)J_1(u)$ and $Y_1^2(u)$ involve products $J_n\,Y_n$ of higher order Bessel functions in addition to $J_0$ and $Y_0$.) An alternative way to establish the incompatibility of solutions is by investigating their asymptotic behavior: from $J_1(u)\sim \sqrt{\frac{2}{\pi u}}\cos(u-\frac{\pi}{2}-\frac{\pi}{4})$ and $Y_1(u)\sim \sqrt{\frac{2}{\pi u}}\sin(u-\frac{\pi}{2}-\frac{\pi}{4})$, we conclude that Eq. \eqref{eq::3rdParGaussMapzprime} decays to zero as $1/u$, while the expression for $z'^2$ from the solution for the equation in $g$ decays as $1/\sqrt{u}$.

Finally, a similar reasoning also applies for the case where $\lambda,\lambda_3<0$ by using the corresponding identities and properties for $I_n$ and $K_n$.
\end{proof}

\section{Parabolic Revolution Surfaces with Coordinate Finite-type Gauss Map}
\label{S:ClassificationParabRev}

Since the top view projections of the parabolic and minimal Gauss maps given in Sect. \ref{Prs} are the same, we will proceed as in the study of helicoidal surfaces. In other words, we first investigate the eigenvalue problem for the minimal normal. Later, these solutions can be used to fix the first two coordinates of the parabolic normal. Finally, it remains to analyze the last coordinate of $\mathbf{G}$. The strategy then consists in checking the compatibility of the known solutions for the first and second coordinates with the eigenvalue problem of the last one.

\subsection{Parabolic revolution surfaces with coordinate finite-type minimal normal}

Since the Laplacian of the minimal normal from \eqref{mnParRev} and \eqref{eq::LapParabRevSurf} is 
\begin{equation}
\Delta_g\mathbf{N}_m=\frac{a^2+b^2}{b^2}(-z''',\frac{a}{b}z''',0),
\end{equation}
the corresponding eigenvalue problems, $-\Delta_gN_m^i=\lambda_iN_m^i$, become
\begin{equation}
-\frac{a^2+b^2}{b^2}z'''=\lambda_1(c_1 t+z')\mbox{ and }-\frac{a(a^2+b^2)}{b^3}z'''=\lambda_2(\frac{a z'-c-c_1u}{b}-c_2t).
\end{equation}
Now, using that $\{1,t\}$ is a set of linearly independent functions, we have the following sets of equations for $\lambda_1$ and $\lambda_2$:
\begin{equation}\label{Eq::EigenvProbLamb1ParabRevSurf}
\left\{
\begin{array}{c}
\lambda_1 c_1 = 0\\[2pt]
(a^2+b^2)z'''+\lambda_1b^2z'=0\\
\end{array}
\right.
\end{equation}
and
\begin{equation}\label{Eq::EigenvProbLamb2ParabRevSurf}
\left\{
\begin{array}{c}
\lambda_2 c_2 = 0\\[2pt]
a(a^2+b^2)z'''+\lambda_2b^2(az'-c-c_1u)=0\\
\end{array}
\right..
\end{equation}
Analyzing all possibilities, we have the following classification of parabolic revolution surfaces whose minimal normal $\mathbf{N}_m$ is of coordinate finite-type.

\begin{figure}[t]
    \centering
    \includegraphics[width=0.9\linewidth]{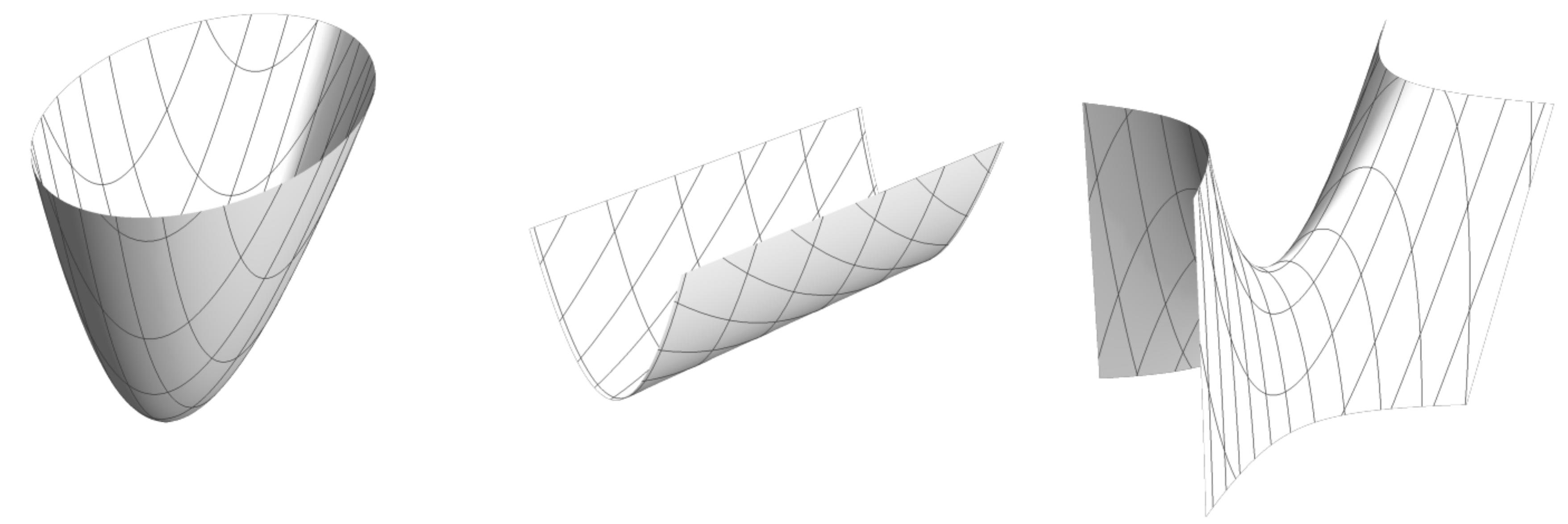}
    \caption{Parabolic revolution surfaces with harmonic minimal Gauss map $\mathbf{N}_m$ according to Theorem \ref{Thr::FiniteTypeTopViewGaussMap}. These surfaces have constant mean curvature and have an implicit equation $z+z_0=z_2x^2+2\alpha xy+\beta y^2+z_1 x+\gamma y$, where $\alpha=(c_1-2az_2)/2b$, $\beta=(2a^2z_2-ac_1+bc_2)/2b^2$, and $\gamma=(c-az_1)/b$. The surfaces are (Left) Elliptic paraboloids if $z_2\beta>\alpha^2$; (Center)  Parabolic cylinders if $z_2\beta=\alpha^2$; and (Right) Hyperbolic paraboloids if $z_2\beta<\alpha^2$. }
    \label{fig:CMCparabRevSurf}
\end{figure}

\begin{theorem}\label{Thr::FiniteTypeTopViewGaussMap}
Let $ M^2_{(a,b,c,c_1,c_2)}$ be a parabolic revolution surface with generating curve $\alpha(u)=(u,0,z(u))$ and such that the minimal normal $\mathbf{N}_m$ is of coordinate finite-type, $-\Delta_g\mathbf{N}_m=(\lambda_1N_m^1,\lambda_2N_m^2,0).$ Then, $ M^2_{(a,b,c,c_1,c_2)}$ belongs to one of the following families:   
\begin{enumerate}[(1)]  
\item If $\lambda_1=\lambda_2=0$, then $z(u)=z_2u^2+z_1u+z_0$, where $z_i$ is constant and $(a,b,c,c_1,c_2)\in\{c_1\not=0\mbox{ or }z(u)\not=\mbox{const.}\}\cap\{2az_2\not=c_1\mbox{ or }az_1\not=c\}$. (See Fig. \ref{fig:CMCparabRevSurf}.)
\item If $\lambda_1=0$ and $\lambda_2\not=0$,
then either
 \begin{enumerate}[(a)]
  \item $(a,b,c,c_1,c_2)=(0,b,0,0,0)$ and $z(u)=z_2u^2+z_1u+z_0$,
  or
  \item $(a,b,c,c_1,c_2)=(a\not=0,b,c,c_1,0)$ and $z(u)=\frac{c_1}{2a}u^2+\frac{c}{a}u+z_0$.
 \end{enumerate}
\item If $\lambda_1\not=0$ and $\lambda_2=0$, then $(a,b,c,c_1,c_2)=(a,b,c,0,c_2)$ and $z(u)=z_0$. 
\item If $\lambda_1,\lambda_2\not=0$, then either
  \begin{enumerate}[(a)]
  \item $(a,b,c,c_1,c_2)=(0,b,0,0,0)$ and 
     \begin{equation}
       z(u)=\left\{
       \begin{array}{cc}
        z_0+z_1\cos(\sqrt{\lambda_1}\,u)+z_2\sin(\sqrt{\lambda_1}\,u), &\mbox{ if }\lambda_1>0\\[2pt]
        z_0+z_1\cosh(\sqrt{-\lambda_1}\,u)+z_2\sinh(\sqrt{-\lambda_1}\,u), &\mbox{ if }\lambda_1<0\\
       \end{array}
        \right.,
      \end{equation}
  or
  \item $(a,b,c,c_1,c_2)=(a\not=0,b,0,0,0)$, $\lambda_1=\lambda_2=\lambda$, and 
  \begin{equation}
       z(u)=\left\{
       \begin{array}{cc}
        z_0+z_1\cos(\sqrt{\Lambda}\,u)+z_2\sin(\sqrt{\Lambda}\,u), &\mbox{ if }\lambda>0\\[2pt]
        z_0+z_1\cosh(\sqrt{-\Lambda}\,u)+z_2\sinh(\sqrt{-\Lambda}\,u), &\mbox{ if }\lambda<0\\
       \end{array}
        \right.,
      \end{equation}
  where $\Lambda=\frac{\lambda b^2}{a^2+b^2}$.
 \end{enumerate} 
\end{enumerate}
\end{theorem}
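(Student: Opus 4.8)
The plan is to exploit the two systems of ODEs \eqref{Eq::EigenvProbLamb1ParabRevSurf} and \eqref{Eq::EigenvProbLamb2ParabRevSurf} that were extracted from the linear independence of $\{1,t\}$, and to do a straightforward but careful case analysis on the vanishing/non-vanishing of $\lambda_1,\lambda_2,c_1,c_2$. The key observation driving everything is that the first equation in each system is an \emph{algebraic} constraint: $\lambda_1c_1=0$ and $\lambda_2c_2=0$. So whenever $\lambda_1\not=0$ we are forced to have $c_1=0$, and whenever $\lambda_2\not=0$ we are forced to have $c_2=0$; the second equation in each system is then the genuine differential equation for $z$. I would organize the proof exactly along the four cases listed in the statement, i.e.\ according to which of $\lambda_1,\lambda_2$ vanish.

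\textbf{Case (1): $\lambda_1=\lambda_2=0$.} Both differential equations collapse to $z'''=0$ (using $a^2+b^2\not=0$ and, in the second, either $a=0$ trivially or $z'''=0$ directly), so $z(u)=z_2u^2+z_1u+z_0$. The only remaining point is to record the admissibility/non-degeneracy constraints: the surface must not already fall under a more degenerate type, which amounts to $c_1\not=0$ or $z$ non-constant, together with the requirement $2az_2\not=c_1$ or $az_1\not=c$ that keeps $\mathbf{N}_m$ from being literally constant (so that the eigenvalue problem is non-vacuous). I would simply substitute $z(u)=z_2u^2+z_1u+z_0$ into \eqref{mnParRev} and read off when $\mathbf{N}_m$ degenerates.

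\textbf{Cases (2) and (3): exactly one of $\lambda_1,\lambda_2$ nonzero.} If $\lambda_1=0$, $\lambda_2\not=0$, then $c_2=0$ and the second system gives $a(a^2+b^2)z'''+\lambda_2b^2(az'-c-c_1u)=0$; meanwhile $\lambda_1=0$ forces (from the second equation of \eqref{Eq::EigenvProbLamb1ParabRevSurf}) $z'''=0$, hence $az'-c-c_1u$ must be constant, i.e.\ $az''=c_1$ and then $a\cdot 2z_2=c_1$. Splitting on $a=0$ versus $a\not=0$ yields subcases (2a) and (2b): when $a=0$ we need $c_1=0$ (so the parameter tuple collapses to $(0,b,0,0,0)$) and $z$ is an arbitrary quadratic; when $a\not=0$ we solve $z''=c_1/(2a)$ and integrate, using $az'-c-c_1u=\text{const}$ to pin down the linear coefficient as $c/a$, giving $z(u)=\frac{c_1}{2a}u^2+\frac{c}{a}u+z_0$. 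The case $\lambda_1\not=0$, $\lambda_2=0$ is the mirror image: $c_1=0$, and from $\lambda_1\not=0$ the first system's ODE becomes $(a^2+b^2)z'''+\lambda_1b^2z'=0$ with $z'''=0$ forced by $\lambda_2=0$'s ODE (which reads $a(a^2+b^2)z'''+0=0$ only if $a\not=0$; if $a=0$ handle separately), so $z'=0$, i.e.\ $z$ constant, and the tuple is $(a,b,c,0,c_2)$.

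\textbf{Case (4): $\lambda_1,\lambda_2\not=0$.} Then $c_1=c_2=0$. Equation \eqref{Eq::EigenvProbLamb1ParabRevSurf} becomes $(a^2+b^2)z'''+\lambda_1b^2z'=0$, a linear constant-coefficient ODE in $z'$; equation \eqref{Eq::EigenvProbLamb2ParabRevSurf} becomes $a(a^2+b^2)z'''+\lambda_2b^2(az'-c)=0$. If $a=0$ the second equation forces $c=0$ and is otherwise vacuous, leaving only the first equation; writing $\Lambda=\lambda_1b^2/(a^2+b^2)=\lambda_1$ (since $a=0$) gives the trigonometric/hyperbolic solutions of subcase (4a). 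If $a\not=0$, I would subtract $a$ times the first ODE from the second to get $\lambda_2b^2(az'-c)=a\lambda_1b^2z'$, i.e.\ $(\lambda_2-\lambda_1)az'=\lambda_2c$; differentiating kills the constant, so $(\lambda_2-\lambda_1)z''=0$. Since $z$ is non-constant (otherwise $\mathbf{N}_m$ is constant) we conclude $\lambda_1=\lambda_2=:\lambda$, and then $\lambda c=0$ forces $c=0$; the first ODE with $\Lambda=\lambda b^2/(a^2+b^2)$ yields subcase (4b). The main obstacle — really the only delicate part — is keeping the bookkeeping of the degenerate sub-branches honest: at several points (e.g.\ $a=0$ in cases 3 and 4) one equation becomes identically satisfied and one must be careful not to lose a family or spuriously gain one, and one must consistently impose the standing non-degeneracy hypothesis ($\mathbf{N}_m$ not constant, equivalently $z$ not a "trivial" polynomial relative to the parameters) that makes the eigenvalue problem meaningful. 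Everything else is routine linear ODE solving.
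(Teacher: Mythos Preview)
Your approach is essentially identical to the paper's: the same case split on $(\lambda_1,\lambda_2)$, the same use of the algebraic constraints $\lambda_1c_1=0$ and $\lambda_2c_2=0$, and the same linear ODE analysis for $z$. Two small points deserve tightening. In Case~(2), substituting $z'''=0$ into the second system gives $\lambda_2b^2(az'-c-c_1u)=0$, hence $az'-c-c_1u=0$, not merely ``constant''; it is precisely the vanishing (not constancy) that pins the linear coefficient of $z$ to $c/a$. In Case~(3) you correctly flag ``if $a=0$ handle separately'' but do not carry it out: when $a=0$ the second system is vacuous and the first alone gives $z'''+\lambda_1z'=0$, so unless one also has $c=c_2=0$ (making $N_m^2\equiv0$ and folding the surface into Case~(4a) with a vacuous $\lambda_2$) the conclusion $z=z_0$ does not follow. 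The paper's own proof glosses over this same subcase, so your argument is as complete as the original.
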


\begin{proof}
\textit{Case (1):} If $\lambda_1=\lambda_2=0$, then Eqs. \eqref{Eq::EigenvProbLamb1ParabRevSurf} and  \eqref{Eq::EigenvProbLamb2ParabRevSurf} lead to the same solution for $z$:
\begin{equation}
z(u) = z_0+z_1u+z_2u^2.
\end{equation}
In order to avoid a trivial coordinate eigenfunction, i.e., $N_m^i\equiv0$ (otherwise, $\lambda_i$ could be arbitrary), we have to impose $c_1\not=0$ or $z'(u)\not=0$, for the first coordinate, and $2az_2\not=c_1$ or $az_1\not=c$, for the second coordinate. 

\textit{Case (2):} If $\lambda_1=0$ but $\lambda_2\not=0$, then from the first expression of Eq. (\ref{Eq::EigenvProbLamb2ParabRevSurf}), we have $c_2=0$ and from the second expression of Eq. (\ref{Eq::EigenvProbLamb1ParabRevSurf}), we have $z(u)=z_0+z_1u+z_2u^2$. Using this information in the second expression of Eq. (\ref{Eq::EigenvProbLamb2ParabRevSurf}) gives
\begin{equation}
(2az_2-c_1)u+(az_1-c)=0.
\end{equation}
If $a=0$, then $c=c_1=0$ ($\{1,u\}$ is linearly independent) and $z(u)$ can be any quadratic polynomial. On the other hand, if $a\not=0$, then
\begin{equation}
z(u) = z_0+\frac{c}{a}u+\frac{c_1}{2a}u^2.
\end{equation}

\textit{Case (3):} If $\lambda_2=0$ but $\lambda_1\not=0$, then from the first expression of Eq. (\ref{Eq::EigenvProbLamb1ParabRevSurf}), we have $c_1=0$ and from the second expression of Eq. (\ref{Eq::EigenvProbLamb2ParabRevSurf}), we have $z(u)=z_0+z_1u+z_2u^2$. Using this information in Eq. (\ref{Eq::EigenvProbLamb1ParabRevSurf}) gives $z'=0\Rightarrow z(u)=z_0$.

\textit{Case (4):} If $\lambda_1,\lambda_2\not=0$, then from the first expressions in both Eqs. (\ref{Eq::EigenvProbLamb1ParabRevSurf}) and (\ref{Eq::EigenvProbLamb2ParabRevSurf}), we have $c_1=c_2=0$. The eigenvalue problems become
\begin{equation}\label{Eq::EigenvProbLamb1and2NotZeroParRevSurf}
\left\{
\begin{array}{c}
z'''+\lambda_1\frac{b^2}{a^2+b^2}z'=0\\[3pt]
az'''+\lambda_2\frac{b^2}{a^2+b^2}(az'-c)=0\\
\end{array}
\right..
\end{equation}
Now, we have 2 sub-cases to be analyzed: $a=0$ or $a\not=0$. If $a=0$, then $\lambda_2c=0$ and, therefore, $c=0$. Finally, the equation for $\lambda_1$ gives
$$
z'''+\lambda_1z'=0 \Rightarrow
z(u)=\left\{
\begin{array}{cc}
z_0+z_1\cos(\sqrt{\lambda_1}\,u)+z_2\sin(\sqrt{\lambda_1}\,u), &\,\lambda_1>0\\[2pt]
z_0+z_1\cosh(\sqrt{-\lambda_1}\,u)+z_2\sinh(\sqrt{-\lambda_1}\,u),& \,\lambda_1<0\\
\end{array}
\right..
$$
On the other hand, if $a\not=0$, then from the first expression of Eq. (\ref{Eq::EigenvProbLamb1and2NotZeroParRevSurf}), 
\begin{equation}
z(u)=\left\{
\begin{array}{cc}
z_0+z_1\cos(\sqrt{\Lambda_1}\,u)+z_2\sin(\sqrt{\Lambda_1}\,u), &\mbox{ if }\lambda_1>0\\[2pt]
z_0+z_1\cosh(\sqrt{-\Lambda_1}\,u)+z_2\sinh(\sqrt{-\Lambda_1}\,u), &\mbox{ if }\lambda_1<0\\
\end{array}
\right.,
\end{equation}
where $\Lambda_1=\lambda_1\frac{b^2}{a^2+b^2}$. Using this in the second expression of Eq. (\ref{Eq::EigenvProbLamb1and2NotZeroParRevSurf}), we have
\begin{equation}
(\lambda_2-\lambda_1)z' = \frac{c}{a}\lambda_2.
\end{equation}
If it were $\lambda_1\not=\lambda_2$, then we would have $z(u)$ linear, what contradicts the expression for $z(u)$ as a linear combination of (hyperbolic) trigonometric functions. Therefore, we conclude that $\lambda_1=\lambda_2$, from which it follows from the expression above that $c=0$.
\end{proof}

\begin{corollary}\label{Cor::FiniteTypeTopViewGaussMap}
Let $ M^2_{(a,b,c,c_1,c_2)}$ be a parabolic revolution surface such that the minimal Gauss map $\mathbf{N}_m$ is of coordinate finite-type with eigenvalues $\lambda_1,\lambda_2$. If $\mathbf{N}_m$ is not harmonic, then $ M^2_{(a,b,c,c_1,c_2)}$ is a non-isotropic cylinder generated by 
\begin{enumerate}[(1)]
\item a parabola if $\lambda_1=0$ or $\lambda_2=0$, but not both; or
 \item a linear combination of (hyperbolic) sine and cosine functions if $\lambda_1\lambda_2\not=0$.  
\end{enumerate}
\end{corollary}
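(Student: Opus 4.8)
The plan is to obtain this as a direct reading of Theorem~\ref{Thr::FiniteTypeTopViewGaussMap}. By hypothesis $\mathbf{N}_m$ is not harmonic, which rules out Case~(1) of that theorem (where $\lambda_1=\lambda_2=0$); hence at least one eigenvalue is nonzero and we are in one of Cases~(2), (3) or~(4). For each of these I would (i) recognize the surface as a cylinder whose ruling direction has nonzero top view, and (ii) identify the directrix.

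First I would deal with Cases~(2) and~(3), in which exactly one of $\lambda_1,\lambda_2$ vanishes. There Theorem~\ref{Thr::FiniteTypeTopViewGaussMap} forces $z(u)$ to be a polynomial of degree at most two and fixes several of the parameters $a,b,c,c_1,c_2$. To exhibit the cylindrical structure I would change coordinates in the parameterization $\mathbf{P}(u,t)$ by setting $x=u+at$ and $y=bt$ (the ambient top-view coordinates) and check that the third coordinate of $\mathbf{P}$ then depends on $x$ alone (sub-cases (2a), (2b)) or on $y$ alone (Case~(3)). The point requiring care is that the terms $\frac{ac_1+bc_2}{2}t^2$ and $c_1ut$ appearing in the third coordinate of $\mathbf{P}$ must cancel against the cross- and squared-terms produced by substituting $u=x-\frac{a}{b}y$ into the quadratic $z(u)$; a direct expansion shows that, under the constraints supplied by the theorem in each sub-case, this cancellation occurs and one is left with implicit equations of the form $z=z_2x^2+z_1x+z_0$, $z=\frac{c_1}{2a}x^2+\frac{c}{a}x+z_0$, or $z=\frac{c_2}{2b}y^2+\frac{c}{b}y+z_0$. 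Each of these describes a cylinder ruled by the lines parallel to $(0,1,0)$ or $(1,0,0)$, hence a non-isotropic direction, whose directrix, obtained by slicing with the plane $y=0$ or $x=0$, is a parabola. This is item~(1).

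For Case~(4), where $\lambda_1\lambda_2\neq0$, Theorem~\ref{Thr::FiniteTypeTopViewGaussMap} yields $c=c_1=c_2=0$, so $\mathbf{P}(u,t)=(u+at,bt,z(u))$ with $z$ a linear combination of $\cos(\sqrt{\Lambda}\,u),\sin(\sqrt{\Lambda}\,u)$ when the eigenvalue is positive, or of $\cosh(\sqrt{-\Lambda}\,u),\sinh(\sqrt{-\Lambda}\,u)$ when it is negative, with $\Lambda$ as in the theorem. The same substitution $x=u+at$, $y=bt$ rewrites the surface as $\{\,z=z(x-\frac{a}{b}y)\,\}$, a cylinder ruled by lines parallel to $(a,b,0)$, which is non-isotropic because $b>0$; its directrix, obtained by slicing with $y=0$, is the curve $z=z(x)$, i.e.\ a linear combination of sine and cosine (resp.\ hyperbolic sine and cosine). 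This is item~(2). The main obstacle is bookkeeping rather than conceptual: one has to carry out the coordinate change $x=u+at$, $y=bt$ in every sub-case of Theorem~\ref{Thr::FiniteTypeTopViewGaussMap} and confirm the cancellation of the $t^2$- and $ut$-contributions that would otherwise spoil the cylindrical form, while keeping track of which parameters are forced to vanish so as to ensure that the directrix is a genuine parabola (Cases~(2) and~(3)) or a genuine (hyperbolic) trigonometric curve (Case~(4)) and that the ruling direction always has nonzero top view.
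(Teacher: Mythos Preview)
Your proposal is correct and follows essentially the same approach as the paper: both run through the non-harmonic cases (2), (3), (4) of Theorem~\ref{Thr::FiniteTypeTopViewGaussMap} and explicitly exhibit each surface as a cylinder over a parabola or over a (hyperbolic) trigonometric curve. The only difference is cosmetic---the paper uses case-by-case reparametrizations (e.g.\ $(v,t)=(u+at,t)$ in~(2.b), and writes~(3) as $\beta(t)+u(1,0,0)$), whereas you apply the single ambient change $x=u+at$, $y=bt$ uniformly and read off the implicit equation.
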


\begin{proof}
All the surfaces from the cases (2.a), (4.a), and (4.b) in Theorem \ref{Thr::FiniteTypeTopViewGaussMap} are clearly cylinders parameterized as
\begin{equation}
\mathbf{P}(u,t) = (u,0,z(u))+t(a,b,0),
\end{equation}
where $z(u)$ is either quadratic or a linear combination of (hyperbolic) trigonometric functions. In case (2.b), after employing the coordinate change $(v,t)=(u+at,t)$, the corresponding surface is the parabolic cylinder
\begin{equation}
\mathbf{P}(v,t) = (v,0,z_0+\frac{c}{a}v+\frac{c_1}{2a}v^2)+t(0,b,0).
\end{equation}
Finally, for the remaining case (3), the corresponding surface is a parabolic cylinder
\begin{equation}
\mathbf{P}(u,t) = \beta(t)+u(1,0,0),
\end{equation}
where the generating curve is the parabola $\beta(t)=t(a,b,c)+[\frac{bc_2}{2}t^2+z_0](0,0,1)$.
\end{proof}

\subsection{Parabolic revolution surfaces with coordinate finite-type minimal normal with prescribed boundary conditions}

Notice that the spectra are continuous for all the surfaces obtained in Theorem \ref{Thr::FiniteTypeTopViewGaussMap}. In fact, it is possible to find surfaces for any given value of $(\lambda_1,\lambda_2)\in\mathbb{R}^2$. (Notice that in cases (2.a), (2.b), (3), and (4.a) the solutions do not depend on one of the eigenvalues, what is explained by the fact that the corresponding coordinate of the normal field vanishes identically.) To obtain a discrete spectrum, it is necessary to impose some sort of boundary conditions on the generating curve $\alpha(u)=(u,0,z(u))$.

\begin{proposition}
Let $M_{(a,b,0,0,0)}^2$ be a parabolic revolution surface with generating curve $\alpha(u)=(u,0,z(u))$ and with coordinate finite-type  minimal Gauss map $\mathbf{N}_m$ with $\lambda_1=\lambda_2$. Let $a,L>0$ be constant, it follows that, up to translations along the isotropic direction, 
\begin{enumerate}[(1)]
    \item if we assume homogeneous boundary conditions, $z(a)=0=z(a+L)$, then
    \begin{equation}
    \left\{\begin{array}{c}
         z(u)=\zeta_0\sin\left(\sqrt{\Lambda_n}(u-a)\right)  \\
         \Lambda_n=\displaystyle\frac{\lambda_nb^2}{a^2+b^2}=\frac{\pi^2n^2}{L^2},\,n\in\{1,2,3,\dots\}
    \end{array}
    \right.,
    \end{equation}
    where $\zeta_0$ is a constant.
    \item if we assume periodic boundary conditions, $\forall k\in\mathbb{Z}$, $z(a)=z(a+kL)$, then
    \begin{equation}
    \left\{\begin{array}{c}
         z(u)=z_1\cos(\sqrt{\Lambda_n}u)+z_2\sin(\sqrt{\Lambda_n}u)  \\[2pt]
         \Lambda_n=\displaystyle\frac{\lambda_nb^2}{a^2+b^2}=\frac{4\pi^2n^2}{L^2},\,n\in\{0,1,2,\dots\} 
    \end{array}
    \right..
    \end{equation}
\end{enumerate}
\end{proposition}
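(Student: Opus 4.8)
The plan is to reduce the boundary-value problem to the case $\lambda_1=\lambda_2=\lambda\neq0$ and then fit the known general solution from Theorem~\ref{Thr::FiniteTypeTopViewGaussMap} to the prescribed conditions. First I would observe that when $c=c_1=c_2=0$ and $(a,b)\neq(0,0)$, the surface $M_{(a,b,0,0,0)}^2$ is the translation cylinder $\mathbf{P}(u,t)=(u,0,z(u))+t(a,b,0)$, and the coordinate finite-type condition with $\lambda_1=\lambda_2$ reduces — by Case~(4.b) of Theorem~\ref{Thr::FiniteTypeTopViewGaussMap} — to $z'''+\Lambda z'=0$ with $\Lambda=\lambda b^2/(a^2+b^2)$. (The harmonic case $\lambda=0$ gives $z$ quadratic, which I will rule out below once the boundary conditions are imposed, so I may assume $\lambda\neq0$ and hence $\Lambda\neq0$.) Integrating once gives $z''+\Lambda z=\text{const}$; absorbing the constant into a translation along the isotropic $z$-direction (permitted by the "up to translations" clause, since $\mathcal{B}_6$ contains $z\mapsto z+c$), the general bounded-form solution is a linear combination of $\cos(\sqrt{\Lambda}u)$ and $\sin(\sqrt{\Lambda}u)$ when $\Lambda>0$, and of $\cosh$, $\sinh$ when $\Lambda<0$.

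For part~(1), I would impose $z(a)=0=z(a+L)$. The hyperbolic case $\Lambda<0$ must be discarded: a nonzero combination $z_1\cosh(\sqrt{-\Lambda}u)+z_2\sinh(\sqrt{-\Lambda}u)$ can vanish at most once (its zero set is a single point, since $\tanh$ is injective), so it cannot satisfy $z(a)=z(a+L)=0$ with $L>0$ unless $z\equiv0$; the same argument kills the quadratic $\Lambda=0$ solution unless it is the trivial one or a constant removed by translation. Hence $\Lambda>0$, and writing $z(u)=A\cos(\sqrt{\Lambda}(u-a))+B\sin(\sqrt{\Lambda}(u-a))$, the condition $z(a)=0$ forces $A=0$, so $z(u)=\zeta_0\sin(\sqrt{\Lambda}(u-a))$. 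Then $z(a+L)=0$ gives $\sin(\sqrt{\Lambda}L)=0$, i.e. $\sqrt{\Lambda}L=\pi n$ for $n\in\mathbb{N}$, which is exactly $\Lambda_n=\pi^2n^2/L^2$ and, unwinding, $\lambda_n=\Lambda_n(a^2+b^2)/b^2$.

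For part~(2), I would impose $z(a)=z(a+kL)$ for all $k\in\mathbb{Z}$. Again $\Lambda\le0$ is excluded — a non-constant quadratic or a non-constant $\cosh/\sinh$ combination is not periodic — so $\Lambda>0$ and $z(u)=z_1\cos(\sqrt{\Lambda}u)+z_2\sin(\sqrt{\Lambda}u)$. Such a function satisfies $z(u)=z(u+T)$ for all $u$ iff $\sqrt{\Lambda}T\in 2\pi\mathbb{Z}$; demanding this for $T=kL$, all $k\in\mathbb{Z}$, reduces (taking $k=1$) to $\sqrt{\Lambda}L=2\pi n$ for some $n\in\mathbb{N}\cup\{0\}$, hence $\Lambda_n=4\pi^2n^2/L^2$ and $\lambda_n=\Lambda_n(a^2+b^2)/b^2$. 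The case $n=0$ corresponds to $z$ constant, which is consistent and retained. I expect the only subtle point to be the careful elimination of the $\Lambda\le0$ branches and the bookkeeping that the prescribed data are stated "up to translations along the isotropic direction," which is what lets me drop the integration constant from $z''+\Lambda z=\text{const}$; everything else is a direct substitution into the solution forms already established in Theorem~\ref{Thr::FiniteTypeTopViewGaussMap}.
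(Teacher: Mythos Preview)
Your proposal is correct and follows essentially the same route as the paper: set $z_0=0$ by the translation freedom, rule out $\Lambda\le 0$ because quadratics and hyperbolic combinations cannot meet the boundary conditions nontrivially, and then fit the trigonometric solution from Theorem~\ref{Thr::FiniteTypeTopViewGaussMap}. The only cosmetic differences are that in Case~(1) you expand in the shifted basis $\{\cos(\sqrt{\Lambda}(u-a)),\sin(\sqrt{\Lambda}(u-a))\}$, which makes $z(a)=0$ kill the cosine term immediately, whereas the paper works in the unshifted basis, forces the $2\times2$ coefficient determinant to vanish, and then introduces a phase to recover the same form; and in Case~(2) the paper passes to the complex exponential $c_0\mathrm{e}^{\mathrm{i}\sqrt{\Lambda}u}$ to read off $\mathrm{e}^{\mathrm{i}\sqrt{\Lambda}L}=1$, while you argue directly with real periodicity---same content, different bookkeeping.
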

\begin{proof} Without loss of generality, we may set $z_0=0$ in the general solutions from Theorem \ref{Thr::FiniteTypeTopViewGaussMap}. (Geometrically, $z_0$ is associated with a translation of the corresponding surface along the isotropic direction.)

\textit{Case (1):} From Theorem \ref{Thr::FiniteTypeTopViewGaussMap} we can see that we should have $\lambda>0$ and, therefore, $z(u)=z_1\cos(\sqrt{\Lambda}u)+z_2\sin(\sqrt{\Lambda}u)$. Now, applying the boundary conditions, we are led to the following equations
\begin{equation}
    \left\{
    \begin{array}{ccc}
    z_1c_a+z_2s_a     & = & 0 \\
    (c_ac_L-s_as_L)z_1+(s_ac_L+c_as_L)z_2 & = & 0\\
    \end{array}
    \right.,
\end{equation}
where $c_x=\cos(\sqrt{\Lambda}\,x)$ and $s_x=\sin(\sqrt{\Lambda}\,x)$. Since $(z_1,z_2)\not=(0,0)$, the above system of equations is degenerate and, then, the following determinant vanishes
\begin{equation}
    \left\vert
    \begin{array}{cc}
        c_a & s_a \\
    (c_ac_L-s_as_L)&(s_ac_L+c_as_L)\\
    \end{array}
    \right\vert=0\Rightarrow s_L=0.
\end{equation}
Finally, since $z(u)$ is a non-trivial solution, it follows that $\sin(\sqrt{\Lambda}L)=0$ and that $\sqrt{\Lambda}L$ must assume the discrete values $\sqrt{\Lambda_n}\,L=n\pi$ for $n=1,2,\dots$. Now, writing $(z_1,z_2)=(\zeta\sin\phi,\zeta\cos\phi)$, $\zeta\not=0$, we have
\begin{equation}
    0=z(a)=\zeta\sin(\sqrt{\Lambda_n}a+\phi)\Rightarrow \phi=k\pi-\sqrt{\Lambda_n}a,\,k\in\mathbb{Z}.
\end{equation}
Finally, we can rewrite the general solution as
\begin{equation}
    z(u)=\zeta\sin(\sqrt{\Lambda_n}u+\phi_{k})=\zeta\sin(\sqrt{\Lambda_n}(u-a)+k\pi)=\zeta_0\sin(\sqrt{\Lambda_n}(u-a)),
\end{equation}
where $\zeta_0=(-1)^k\zeta$.

\textit{Case (2):} As in the previous case, here the eigenvalues should be also positive. Working with the solution in its complex form, $z(u)=c_0\mathrm{e}^{\mathrm{i}\sqrt{\Lambda}\,u}$, and applying the boundary conditions implies
\begin{equation}
   \mathrm{e}^{\mathrm{i}\sqrt{\Lambda}\,a}=\mathrm{e}^{\mathrm{i}\sqrt{\Lambda}\,(a+L)}\Rightarrow \mathrm{e}^{\mathrm{i}\sqrt{\Lambda}\,L}=1. 
\end{equation}
Then, $\sqrt{\Lambda}L$ must assume the discrete values $\sqrt{\Lambda_n}L=2n\pi$, $\forall\,n=0,1,2,\dots.$
\end{proof}

\subsection{Parabolic revolution surfaces with parabolic Gauss map of coordinate finite-type}

On Theorem \ref{Thr::FiniteTypeTopViewGaussMap}, we investigated the eigenvalue problem for $\mathbf{N}_m$. The problem for the third coordinate of $\mathbf{N}_m$ is trivial, but the solutions for the first two coordinates can be applied to the parabolic Gauss map. The strategy now consists in checking the compatibility of the solution for the first coordinates $G^1$ and $G^2$ with the last one $G^3$. 

The Laplacian of a parabolic revolution surface, Eq. (\ref{eq::LapParabRevSurf}), when applied to $G^3$, Eq. (\ref{eq::3rdCoordNormalParabRevSurf}), gives
\begin{eqnarray}
\Delta G^3 & = & -\frac{\left(a^2+b^2\right)^2}{b^4}(z''^2+z'z''')+\frac{a\left(a^2+b^2\right) \left(c+c_1u\right)}{b^4}z'''+\label{eq::LapCoord3ParabNormalOfParabRevSurf}\\
& + & \frac{2a[2 b^2c_1+a(ac_1-bc_2)]}{b^4}z''-\frac{(ac_1-bc_2)^2+2 b^2c_1^2}{b^4}+ \frac{t}{b^3}(a^2+b^2)(a c_2-b c_1)z'''. \nonumber
\end{eqnarray}

The analysis now will be divided into two instances. The first theorem below refers to $\lambda_1=\lambda_2=0$ while the second refers to $\lambda_1=\lambda_2\not=0$. Notice we must assume that $\lambda_1=\lambda_2$ in order to avoid trivial eigenproblems, i.e., $G^1$ or $G^2$ identically zero.

\begin{theorem}\label{Thr::FiniteTypeParabGaussMapParabRevSurf}
Let $ M^2_{(a,b,c,c_1,c_2)}$ be a parabolic revolution surface with generating curve $\alpha(u)=(u,0,z(u))$ and whose top-view projection of the parabolic Gauss map $\mathbf{G}$ is of finite-type, as described in Theorem \ref{Thr::FiniteTypeTopViewGaussMap}, with $\lambda_1=\lambda_2=0$. In addition, if the third coordinate of $\mathbf{G}$ is a non-zero eigenfunction, then $\lambda_3=0$, $ (a,b,c,c_1,c_2)=(a,b,c,0,0)$, and $z(u)=z_0+z_1 u$.
\end{theorem}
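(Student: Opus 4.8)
The plan is to begin with Theorem~\ref{Thr::FiniteTypeTopViewGaussMap}: once $\lambda_1=\lambda_2=0$ is imposed, the profile is forced to be the quadratic $z(u)=z_0+z_1u+z_2u^2$ of case~(1), whence $z'''\equiv0$ and $z''\equiv2z_2$. I would then substitute this into the Laplacian of $G^3$, Eq.~\eqref{eq::LapCoord3ParabNormalOfParabRevSurf}; every occurrence of $z'''$ vanishes — which in particular kills the sole $t$-dependent contribution — leaving $\Delta G^3$ equal to a \emph{constant}. Hence the eigenvalue equation $-\Delta G^3=\lambda_3G^3$ has a constant left-hand side, and this is the lever for the rest of the argument.

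The decisive computation is to rewrite that constant. Using $z''=2z_2$, one checks that
\begin{equation*}
-b^4\,\Delta G^3=\bigl(a^2z''-(ac_1-bc_2)\bigr)^2+2b^2\,(c_1-az'')^2+b^4\,(z'')^2=:\kappa\ \ge\ 0 .
\end{equation*}
I expect this sum-of-squares rewriting to be the main obstacle; the rest is bookkeeping. If it is not spotted, an equivalent route is to treat $-b^4\Delta G^3=0$ as a quadratic equation in $z_2$ and compute its discriminant, which factors as $-16\bigl[\,2a^2b^2(ac_1-bc_2-abc_1)^2+b^4(ac_1-bc_2)^2+2b^6c_1^2\,\bigr]\le0$; either way one is left with a sum of nonnegative quantities that must all vanish.

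With $\kappa\ge0$ constant, the equation $\kappa/b^4=\lambda_3G^3$ is quickly dispatched. If $\kappa>0$, then $G^3$ must be a nonzero constant, hence $\Delta G^3=0$, contradicting $\Delta G^3=-\kappa/b^4\neq0$; so $\kappa=0$, which makes $\lambda_3G^3\equiv0$, and since $G^3$ is assumed to be a non-zero eigenfunction we get $\lambda_3=0$. Now $\kappa=0$ forces each square to vanish: the last gives $z''=0$, i.e., $z_2=0$; the middle then gives $c_1=0$; and the first then gives $bc_2=0$, i.e., $c_2=0$ (recall $b>0$). Thus $(a,b,c,c_1,c_2)=(a,b,c,0,0)$ and $z(u)=z_0+z_1u$, which is the assertion. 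As a sanity check I would record that in this configuration Eq.~\eqref{eq::3rdCoordNormalParabRevSurf} collapses to $G^3=\tfrac12(1-z_1^2)-\tfrac1{2b^2}(c-az_1)^2$, a constant, so demanding that $G^3$ be a genuine non-zero eigenfunction merely amounts to asking that this number be nonzero.
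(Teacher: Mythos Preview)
Your argument is correct and in fact tidier than the paper's. Both proofs start from Theorem~\ref{Thr::FiniteTypeTopViewGaussMap}, case~(1), to obtain $z(u)=z_0+z_1u+z_2u^2$, whence $z'''=0$ and $\Delta G^3$ reduces to a constant. From there the paths diverge. The paper splits into the two cases $\lambda_3\ne0$ and $\lambda_3=0$: in the first it reads off the $t^2$- and $t$-coefficients of $\lambda_3G^3$ (not of $\Delta G^3$) to force $c_1=c_2=0$, then uses the $u$-polynomial structure of the $t^0$-part to reach $z_2=0$ and a constant $\mathbf{G}$, a contradiction; in the second it treats $\Delta G^3=0$ as a quadratic in $c_1$ and shows its discriminant is $\le0$, forcing $c_2=z_2=0$ and then $c_1=0$. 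Your route replaces this two-pronged analysis by the single identity
\[
-b^4\,\Delta G^3=\bigl(a^2z''-(ac_1-bc_2)\bigr)^2+2b^2(c_1-az'')^2+b^4(z'')^2\ge0,
\]
which handles both cases at once: a positive constant $-\Delta G^3$ would force $G^3$ to be constant and hence $\Delta G^3=0$, a contradiction, so all three squares vanish and the conclusion follows. This is a genuine simplification; the paper's approach, on the other hand, follows more mechanically from the polynomial-in-$t$ structure already used for the first two coordinates and therefore generalizes more transparently to similar eigenvalue problems. One minor remark: your parenthetical ``equivalent route'' via the discriminant in $z_2$ contains what looks like a typo in the factor $(ac_1-bc_2-abc_1)$, but since your main sum-of-squares argument is self-contained this does not affect the proof.
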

\begin{proof}
Since $\lambda_1$ and $\lambda_2$ vanishes, we must have $z(u)=z_0+z_1u+z_2u^2$, which gives $z'=z_1+2z_2u$, $z''=2z_2$, and $z'''=0$. Noticing that the eigenvalue problem $-\Delta G^3=\lambda_3G^3$ can be written as a polynomial of degree 2 in $t$, we are led to three equations. The equations associated with $t^2$ and $t$ are
\begin{equation}\label{eq::lamb3xi3ParabRevSurf}
\left\{
\begin{array}{c}
\frac{\lambda_3}{2}(c_1^2+c_2^2) = 0\\[2pt]
\frac{\lambda_3}{b}[(ac_2-bc_1)(z_1+2z_2u)-c_2(c+c_1u)]=0\\
\end{array}
\right.,
\end{equation}
respectively. We have two sub-cases to consider, either $\lambda_3=0$ or $\lambda_3\not=0$. We are going to show that $\lambda_3$ must vanish.

If it were $\lambda_3\not=0$, then from the first expression in Eq. (\ref{eq::lamb3xi3ParabRevSurf}), we would have $c_1=c_2=0$ (the second expression would be trivially satisfied). Finally, the part of $-\Delta_gG^3=\lambda_3G^3$ depending on $t^0=1$ leads to the equation
\begin{eqnarray}
4\frac{(a^2+b^2)^2}{b^4}z_2^2 & = & \frac{\lambda_3}{2}\left(1-\frac{c^2}{b^2}+\frac{2ac}{b^2}z_1-\frac{a^2+b^2}{b^2}z_1^2\right)+\nonumber\\
& + & 2\lambda_3z_2(\frac{ac}{b^2}-\frac{a^2+b^2}{b^2}z_1)u-2\lambda_3\frac{a^2+b^2}{b^2}z_2^2u^2.\label{Eq::ProblLamb3WithC1C2zero}
\end{eqnarray}
From the coefficient in $u^2$, we deduce that $z_2=0$ or $a^2+b^2=0$. Since $b\not=0$, we conclude $z_2=0$ and, in addition, it follows that the coefficient in $u$ vanishes identically. In short, if it were $\lambda_3\not=0$, we would have $c_1=c_2=0$, $z(u)=z_0+z_1u$, and the parabolic normal would be
\begin{equation}
    \mathbf{G} = (-z_1,\frac{az_1-c}{b},\frac{1}{2}-\frac{c^2}{2b^2}+\frac{acz_1}{b^2}-\frac{(a^2+b^2)z_1^2}{2b^2}),
\end{equation}
which is a constant vector and, consequently, it is not compatible with $\lambda_3\not=0$.

Now, let us assume that $\lambda_3=0$. We have to analyze the equation
\[
 \Delta G^3=\frac{4az_2[2 b^2c_1+a(ac_1-bc_2)]}{b^4}-\frac{4z_2^2(a^2+b^2)^2}{b^4}-\frac{(ac_1-bc_2)^2+2 b^2c_1^2}{b^4}=0.
\]
Seeing it as a degree 2 polynomial in $c_1$, the corresponding discriminant $D_1$ is
\[
D_1 = -\frac{8}{b^4}\left[c_2^2 + 2(a^2 + 2 b^2)z_2^2\right] \leq 0.
\]
To guarantee $c_1\in\mathbb{R}$, we then have $D_1=0$ and, consequently, $(a^2+2b^2)z_2^2=0$ and $c_2^2=0$. Since $b\not=0$, we conclude that $c_2=z_2=0$. Consequently, $\Delta G^3$ becomes
\[
 \Delta G^3 = -\frac{a^2c_1^2+2 b^2c_1^2}{b^4}=0.
\]
Thus, $a^2c_1^2=0$ and $b^2c_1^2=0$ and, since $b\not=0$, we conclude in addition that $c_1=0$. In short, $\lambda_1=\lambda_2=0$ and $\lambda_3=0$ implies $(a,b,c,c_1,c_2)=(a,b,c,0,0)$ and $z(u)=z_1u+z_0$.
\end{proof}

\begin{theorem}\label{Thr::FiniteTypeParabGaussMapParabRevSurfNonHarmTopView}
Let $ M^2_{(a,b,c,c_1,c_2)}$ be a parabolic revolution surface with generating curve $\alpha(u)=(u,0,z(u))$ and whose top-view projection of the parabolic Gauss map $\mathbf{G}$ is of coordinate finite-type, as described in Theorem \ref{Thr::FiniteTypeTopViewGaussMap}, with $\lambda=\lambda_1=\lambda_2\not=0$. In addition, if the third coordinate of $\mathbf{G}$ is a non-zero eigenfunction, then $ M^2_{(a,b,c,c_1,c_2)}=M^2_{(a,b,0,0,0)}$ belongs to one of the following families: 
\begin{enumerate}[(1)]
    \item If $\lambda_3=0$, then  $z(u)=z_0$.
\item If $\lambda_3\not=0$, then $\lambda_3=4\lambda$ and 
  \begin{equation}
       z(u)=\left\{
       \begin{array}{cc}
        z_0+\sqrt{\frac{2}{\Lambda}}\sin(\sqrt{\Lambda}\,u+\phi_0), &\mbox{ if }\lambda>0\\[5pt]
        z_0+\sqrt{-\frac{2}{\Lambda}}\sinh(\sqrt{-\Lambda}\,u+\phi_0), &\mbox{ if }\lambda<0\\
       \end{array}
        \right.,
      \end{equation}
  where $z_0$ and $\phi_0$ are constant and $\Lambda=\lambda b^2/(a^2+b^2)$.
\end{enumerate} 
\end{theorem}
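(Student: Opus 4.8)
The plan is to build on the classification in Theorem~\ref{Thr::FiniteTypeTopViewGaussMap} and reduce the one remaining condition, $-\Delta G^3=\lambda_3 G^3$, to a single linear ordinary differential equation whose solutions can be read off by comparing coefficients. Since by hypothesis $\lambda=\lambda_1=\lambda_2\neq0$, we are in Case~(4) of Theorem~\ref{Thr::FiniteTypeTopViewGaussMap}, so $c_1=c_2=0$ and $c=0$; hence $M^2_{(a,b,c,c_1,c_2)}=M^2_{(a,b,0,0,0)}$ and the generating curve satisfies $z(u)=z_0+z_1\cos(\sqrt{\Lambda}\,u)+z_2\sin(\sqrt{\Lambda}\,u)$ when $\lambda>0$, or the analogous $\cosh/\sinh$ combination when $\lambda<0$, with $\Lambda=\lambda b^2/(a^2+b^2)$ (the subcase $a=0$ of Case~(4.a) being recovered as $\Lambda=\lambda$).

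First I would substitute $c=c_1=c_2=0$ into Eqs.~\eqref{eq::3rdCoordNormalParabRevSurf} and~\eqref{eq::LapCoord3ParabNormalOfParabRevSurf}; this collapses them to $G^3=\tfrac12-\tfrac{a^2+b^2}{2b^2}\,z'^2$ and $\Delta G^3=-\tfrac{(a^2+b^2)^2}{b^4}(z''^2+z'z''')$, both functions of $u$ alone. The key simplifying observation is that $z''^2+z'z'''=\tfrac12(z'^2)''$, so, writing $w=z'^2$, the eigenvalue equation $-\Delta G^3=\lambda_3 G^3$ becomes the linear, constant-coefficient ODE
\begin{equation}
\frac{(a^2+b^2)^2}{b^4}\,w''+\lambda_3\,\frac{a^2+b^2}{b^2}\,w=\lambda_3 .
\end{equation}

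Next I would insert into this ODE the explicit $w=z'^2$ provided by Theorem~\ref{Thr::FiniteTypeTopViewGaussMap}. Because squaring a linear combination of $\cos(\sqrt{\Lambda}\,u)$ and $\sin(\sqrt{\Lambda}\,u)$ yields a constant plus a single harmonic of frequency $2\sqrt{\Lambda}$ (and likewise in the hyperbolic case a constant plus a single $\cosh/\sinh$ of argument $2\sqrt{-\Lambda}\,u$), the function $w$ has the form $w=A_0+A_1\cos(2\sqrt{\Lambda}\,u)+A_2\sin(2\sqrt{\Lambda}\,u)$, with $A_0=\tfrac{\Lambda}{2}(z_1^2+z_2^2)$ and $(A_1,A_2)=(0,0)$ precisely when $z$ is constant. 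If $\lambda_3\neq0$, matching the oscillatory part forces $-4\Lambda\tfrac{(a^2+b^2)^2}{b^4}+\lambda_3\tfrac{a^2+b^2}{b^2}=0$, i.e.\ $\lambda_3=4\Lambda(a^2+b^2)/b^2=4\lambda$, while matching the constant part fixes the amplitude, $z_1^2+z_2^2=2/\lambda$ (and correspondingly $z_2^2-z_1^2=-2/\lambda$ in the hyperbolic case); a phase shift $z_1\cos\theta+z_2\sin\theta=\sqrt{z_1^2+z_2^2}\,\sin(\theta+\phi_0)$ then puts $z(u)$ in the stated closed form. The complementary subcase $\lambda_3=0$ is treated directly: then $\Delta G^3=0$, i.e.\ $(z'^2)''=0$, so $z'^2$ is affine in $u$; but the $z'^2$ coming from Theorem~\ref{Thr::FiniteTypeTopViewGaussMap} is a bounded oscillation (resp.\ a strictly exponential expression), which is affine only if $z_1=z_2=0$, whence $z(u)=z_0$.

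I do not anticipate a genuine obstacle here: once the identity $z''^2+z'z'''=\tfrac12(z'^2)''$ linearizes the problem in the single unknown $w=z'^2$, the rest is substituting the already-known $w$ and comparing Fourier-type coefficients. The point demanding care is completeness of the case analysis — splitting $\lambda_3=0$ from $\lambda_3\neq0$ and tracking the sign of $\lambda$ (trigonometric versus hyperbolic, with $z'^2$ a constant plus a $\cos/\sin$ term when $\lambda>0$ and a constant plus a $\cosh/\sinh$ term when $\lambda<0$) — but the two signs are structurally identical and both deliver $\lambda_3=4\lambda$.
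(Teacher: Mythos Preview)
Your proposal is correct and follows essentially the same route as the paper: reduce to $c=c_1=c_2=0$ via Theorem~\ref{Thr::FiniteTypeTopViewGaussMap}, linearize the $G^3$-equation through the identity $z''^2+z'z'''=\tfrac12(z'^2)''$, and then split on $\lambda_3=0$ versus $\lambda_3\neq0$, using double-angle formulas to match coefficients in the latter case. The only cosmetic differences are that the paper substitutes $w=\tfrac{a^2+b^2}{b^2}z'^2-1$ rather than $w=z'^2$, and in the $\lambda_3=0$ case integrates $(z'^2)''=0$ explicitly to $z(u)=\pm\tfrac{2}{3u_0}(u_0u+u_1)^{3/2}+u_2$ before observing the incompatibility, whereas you argue directly that an affine $z'^2$ forces $z_1=z_2=0$.
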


\begin{proof}
Since $(a,b,c,c_1,c_2)=(a,b,0,0,0)$, the eigenvalue problem $-\Delta_gG^3=\lambda_3G^3$ becomes
\begin{equation}
\left(\frac{a^2+b^2}{b^2}\right)^2(z''^2+z'z''')=\frac{\lambda_3}{2}(1-\frac{a^2+b^2}{b^2}z'^2).
\end{equation}

\textit{Case (1)}: If $\lambda_3=0$, then $\frac{1}{2}(z'^2)''=(z'z'')'=(z''^2+z'z''')=0$, whose general solution has the form $z(u)=\pm\frac{2}{3u_0}(u_0\,u+u_1)^{3/2}+u_2$. Unless $z(u)=z_0$, this contradicts the expression of $z(u)$ as a linear combination of (hyperbolic) trigonometric functions. 

\textit{Case (2)}: If $\lambda_3\not=0$, we have the equation
\[
\frac{1}{2}\left(\frac{a^2+b^2}{b^2}\right)^2(z'^2)''=\frac{\lambda_3}{2}(1-\frac{a^2+b^2}{b^2}z'^2) \Rightarrow w''=-\frac{\lambda_3b^2}{a^2+b^2}w,
\]
where $w=\frac{a^2+b^2}{b^2}z'^2-1$. Then, defining $\Lambda_3=\frac{b^2\lambda_3}{a^2+b^2}$, we have 
\begin{equation}
       z'^2=\left\{
       \begin{array}{cc}
        \frac{b^2}{a^2+b^2}\left[1+w_1\cos(\sqrt{\Lambda_3}\,u)+w_2\sin(\sqrt{\Lambda_3}\,u)\right], &\mbox{ if }\lambda_3>0\\[3pt]
        \frac{b^2}{a^2+b^2}\left[1+w_1\cosh(\sqrt{-\Lambda_3}\,u)+w_2\sinh(\sqrt{-\Lambda_3}\,u)\right], &\mbox{ if }\lambda_3<0\\
       \end{array}
        \right..
\end{equation}

As a first consequence, $\lambda_3$ should have the same sign as $\lambda$ since the signs of $\lambda$ and $\lambda_3$ determine whether the solution involves $\{\cos,\sin\}$ or $\{\cosh,\sinh\}$. Let us first assume that $\lambda>0$. Then, we can write
\begin{equation}
z(u) = z_0+z_1\cos(\sqrt{\Lambda}\,u)+z_2\sin(\sqrt{\Lambda}\,u)
\end{equation}
and
\begin{equation}
z'(u) = -z_1\sqrt{\Lambda}\sin(\sqrt{\Lambda}\,u)+z_2\sqrt{\Lambda}\cos(\sqrt{\Lambda}\,u).
\end{equation}
Using the identities $\cos^2x=\frac{1}{2}+\frac{1}{2}\cos2x$ and $\sin^2x=\frac{1}{2}-\frac{1}{2}\cos2x$, we have
\begin{eqnarray}
z'^2 & = & \Lambda z_1^2\sin^2(\sqrt{\Lambda}u)+\Lambda z_2^2\cos^2(\sqrt{\Lambda}u)+2z_1z_2\Lambda\sin(\sqrt{\Lambda}u)\cos(\sqrt{\Lambda}u)\nonumber\\
& = & \frac{z_1^2+z_2^2}{2}\Lambda+ \frac{z_2^2-z_1^2}{2}\Lambda\cos(2\sqrt{\Lambda}\,u)+z_1z_2\Lambda\sin(2\sqrt{\Lambda}\,u).\nonumber
\end{eqnarray}
Compatibility of the solutions demands the following relations between the parameters  $\{w_i,\lambda_3\}$ and $\{z_i,\lambda\}$,
\begin{equation}
\lambda_3=4\lambda\mbox{ and }
2 = \lambda(z_1^2+z_2^2),
\end{equation}
respectively. Finally, writing $z_2=\zeta\cos\phi_0$ and $z_1=\zeta\sin\phi_0$, we deduce from the second equation that $\zeta^2=2/\lambda$, while $\phi_0$ is an arbitrary constant. The expression for $z(u)$ follows from $\sin(x+y)=\sin x\cos y+\cos x\sin y$.

For the case $\lambda<0$, we can write
\begin{equation}
z(u) = z_0+z_1\cosh(\sqrt{-\Lambda}\,u)+z_2\sinh(\sqrt{-\Lambda}\,u)
\end{equation}
and
\begin{equation}
z'(u) = z_1\sqrt{-\Lambda}\sinh(\sqrt{-\Lambda}\,u)+z_2\sqrt{-\Lambda}\cosh(\sqrt{-\Lambda}\,u).
\end{equation}
Using that $\cosh^2x=\frac{1}{2}+\frac{1}{2}\cosh2x$ and $\sinh^2x=-\frac{1}{2}+\frac{1}{2}\cosh2x$, we have
\begin{eqnarray}
z'^2 & = & -\Lambda z_1^2\sinh^2(\sqrt{-\Lambda}u)-\Lambda z_2^2\cosh^2(\sqrt{-\Lambda}u)-2z_1z_2\Lambda\sinh(\sqrt{-\Lambda}u)\cosh(\sqrt{-\Lambda}u)\nonumber\\
& = & -\Lambda \frac{z_2^2-z_1^2}{2}-\Lambda \frac{z_2^2+z_1^2}{2}\cosh(2\sqrt{-\Lambda}\,u)-z_1z_2\Lambda\sinh(2\sqrt{-\Lambda}\,u).\nonumber
\end{eqnarray}
Compatibility of the solutions demands the following relations between the set of parameters  $\{w_i,\lambda_3\}$ and $\{z_i,\lambda\}$,
\begin{equation}
\lambda_3=4\lambda\mbox{ and }
2 = \lambda(z_1^2-z_2^2),
\end{equation}
respectively. Finally, writing $z_2=\zeta\cosh\phi_0$ and $z_1=\zeta\sinh\phi_0$, we deduce from the second equation that $\zeta^2=-2/\lambda$, while $\phi_0$ is an arbitrary constant. The expression for $z(u)$ follows from the identity $\sinh(x+y)=\sinh x\cosh y+\cosh x\sinh y$.
\end{proof}

\section{Simply Isotropic Surfaces with Harmonic Gauss Map}
\label{S:ClassificationHarmonicGaussMaps}

From Theorems \ref{Thr::FiniteTypeTopViewGaussMapH} and \ref{Thr::FiniteTypeTopViewGaussMap} we can deduce that those surfaces with harmonic minimal normal, $\Delta\mathbf{N}_m=0$, have constant isotropic mean curvature, see Figs. \ref{fig:CMChelSurf} and \ref{fig:CMCparabRevSurf}. (See \cite{daSilvaMJOU2019} for the characterization of invariant surfaces with constant isotropic mean curvature.) In this final section we show that this is valid in general. More precisely, now we address the problem of characterization those surfaces with harmonic minimal or parabolic Gauss map without the assumption that they are invariant.

Any admissible surface can be parameterized in normal form as the graph of a smooth function $f$:
\begin{equation}
    \mathbf{x}(u^1,u^2)=(u^1,u^2,f(u^1,u^2)).
\end{equation}
Then, the vectors spanning the tangent planes are $\mathbf{x}_1 = (1,0,f_1)$ and $\mathbf{x}_2 = (0,1,f_2)$, where $f_i=\partial f/\partial u^i$. The minimal and parabolic normals are
\begin{equation}
    \mathbf{N}_m = (-f_1,-f_2,1)
\mbox{ and }
    \mathbf{G} = (-f_1,-f_2,\frac{1}{2}-\frac{1}{2}(f_1^2+f_2^2)),
\end{equation}
respectively. Finally, the first and second fundamental forms are 
\begin{equation}
\mathrm{I} = (\rmd u^1)^2+(\rmd u^2)^2 \mbox{ and }    \mathrm{II} = f_{ij}\rmd u^i\rmd u^j,
\end{equation}
from which we compute the shape operator, mean and Gaussian curvatures as
\begin{equation}
    S(p) = \mbox{Hess}_pf,\, H= \frac{\Delta f}{2},\mbox{ and }K=f_{11}f_{22}-f_{12}^2,
\end{equation}
where $f_{ij}=\partial^2f/\partial u^i\partial u^j$.
\begin{proposition}
The Laplacian of the minimal and parabolic normal vector fields are given by
\begin{equation}
    \Delta \mathbf{N}_m = (-2H_1,-2H_2,0)
\mbox{ and }
    \Delta \mathbf{G} = -2\nabla H-\tr(S^2)\mathcal{N},
\end{equation}
where $H_i=\partial H/\partial u^i$, $\nabla H = H_1\mathbf{x}_1+H_2\mathbf{x}_2$, $\tr(S^2)=4H^2-2K$, and $\mathcal{N}=(0,0,1)$ is the metric isotropic normal.
\end{proposition}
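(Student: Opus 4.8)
The plan is to use the fact that an admissible surface written in normal form $\mathbf{x}(u^1,u^2)=(u^1,u^2,f(u^1,u^2))$ has flat induced metric $g_{ij}=\delta_{ij}$, so that by Eq.~\eqref{lap} the Laplace--Beltrami operator is simply the ordinary flat Laplacian $\Delta=\partial_1^2+\partial_2^2$. The key structural feature is that $\Delta$ then commutes with $\partial_1$ and $\partial_2$, and that $H=\frac{1}{2}\Delta f$; the whole proposition reduces to componentwise differentiation.

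First I would treat the minimal normal $\mathbf{N}_m=(-f_1,-f_2,1)$. Since $\Delta$ commutes with $\partial_i$, the $i$-th coordinate gives $\Delta(-f_i)=-\partial_i(\Delta f)=-2\,\partial_i H=-2H_i$, while the third coordinate yields $\Delta(1)=0$; this is exactly the first formula. Next, for $\mathbf{G}=(-f_1,-f_2,\frac{1}{2}-\frac{1}{2}(f_1^2+f_2^2))$ the first two coordinates coincide with those of $\mathbf{N}_m$ and hence contribute $-2H_1$ and $-2H_2$. The only genuine computation is the third coordinate: expanding by the product rule, $\partial_j^2(f_i^2)=2f_{ij}^2+2f_if_{ijj}$, and summing over $i,j\in\{1,2\}$, one gets $\Delta(f_1^2+f_2^2)=2(f_{11}^2+2f_{12}^2+f_{22}^2)+2\big(f_1\,\partial_1(\Delta f)+f_2\,\partial_2(\Delta f)\big)$.

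Then I would identify the two groups of terms. Since the isotropic shape operator is $S=\mbox{Hess}\,f$ (as recorded just before the statement), $f_{11}^2+2f_{12}^2+f_{22}^2=\tr(S^2)$, and the algebraic identity $\tr(S^2)=(\tr S)^2-2\det S$ together with $\tr S=\Delta f=2H$ and $\det S=f_{11}f_{22}-f_{12}^2=K$ gives $\tr(S^2)=4H^2-2K$. Substituting $H=\frac{1}{2}\Delta f$ turns the remaining terms into $4(f_1H_1+f_2H_2)$, so the third coordinate of $\Delta\mathbf{G}$ equals $-\frac{1}{2}\Delta(f_1^2+f_2^2)=-\tr(S^2)-2(f_1H_1+f_2H_2)$.

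Finally I would check this against the claimed right-hand side. From $\nabla H=H_1\mathbf{x}_1+H_2\mathbf{x}_2=(H_1,H_2,f_1H_1+f_2H_2)$ one gets $-2\nabla H-\tr(S^2)\mathcal{N}=(-2H_1,-2H_2,-2(f_1H_1+f_2H_2)-\tr(S^2))$, which matches coordinate by coordinate. The argument has no conceptual obstacle; the only point requiring a little care is the bookkeeping in computing $\Delta(f_1^2+f_2^2)$ and recognizing the trace of the squared Hessian, which is handled by the identity $\tr(S^2)=4H^2-2K$.
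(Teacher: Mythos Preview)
Your proof is correct and follows essentially the same approach as the paper: both work in the normal form so that $\Delta=\partial_1^2+\partial_2^2$, commute $\Delta$ with $\partial_i$ to handle the first two coordinates, and for the third coordinate of $\mathbf{G}$ expand $\Delta(f_1^2+f_2^2)$ and identify the resulting terms with $\tr(S^2)$ and $f_iH_i$. Your verification that $-2\nabla H-\tr(S^2)\mathcal{N}$ matches coordinatewise (by computing $\nabla H=(H_1,H_2,f_1H_1+f_2H_2)$ explicitly) is in fact slightly more detailed than the paper's write-up, but the argument is the same.
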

\begin{proof}
Since the metric in normal form is the identity, the Laplace-Beltrami operator of $M^2$ is just the usual plane Laplacian operator $\Delta=\partial_1^2+\partial_2^2$. Then,
\begin{eqnarray}
\Delta \mathbf{N}_m & = & (-\Delta f_1,-\Delta f_2,0)= (-f_{111}-f_{122},-f_{211}-f_{222},0)\nonumber\\
& = & (-\partial_1\Delta f,-\partial_2\Delta f,0)=(-2H_1,-2H_2,0).
\end{eqnarray}
On the other hand, for the parabolic normal
\begin{eqnarray}
\Delta \mathbf{G} & = & (-\Delta f_1,-\Delta f_2,\Delta(\frac{1}{2}-\frac{f_1^2+f_2^2}{2}))\nonumber\\
& = & -(\partial_1\Delta f,\partial_2\Delta f,f_{1}\partial_1\Delta f+f_2\partial_2\Delta f+(f_{11}+f_{22})^2-2f_{11}f_{22}+2f_{12}^2)\nonumber\\
& = & -(\partial_1\,\Delta f)\mathbf{x}_1-(\partial_2\,\Delta f)\mathbf{x}_2-[(\tr\, S)^2-2\det S]\mathcal{N}\nonumber\\
& = & -2\nabla H-\tr(S^2)\mathcal{N},
\end{eqnarray}
where we used that $\tr(S^2)=(\tr\,S)^2-2\det S=4H^2-2K$.
\end{proof}

In Euclidean space, the Laplacian of the surface normal $\mathbf{N}_{\mathrm{eucl}}$ is
\begin{equation}\label{eq::LapEuclNormal}
\Delta_g \mathbf{N}_{\mathrm{eucl}}=-2\nabla H-\tr(S^2)\mathbf{N}_{\mathrm{eucl}}.    
\end{equation}
Thus, the Laplacian of the parabolic normal can be seen as the isotropic analog of this expression, but in $\mathbb{I}^3$ we have to mix together two types of normal vector fields, $\mathbf{G}$ and $\mathcal{N}$. (As a matter of fact, the isotropic analog of the known expression for the position vector of a surface in $\mathbb{E}^3$, namely $\Delta_g\mathbf{x}=2H\mathbf{N}_{\mathrm{eucl}}$, is given by $\Delta_g\mathbf{x}=2H\mathcal{N}$ \cite{SatoArXiv2018}.) On the other hand, the minimal normal allows us to characterize constant mean curvature isotropic surfaces.

\begin{theorem}\label{Thr::CharHarmonicGaussMaps}
Let $M^2\subset\mathbb{I}^3$ be an admissible  surface, then 
\begin{enumerate}[(1)]
    \item The minimal normal $\mathbf{N}_m$ is harmonic, $\Delta\mathbf{N}_m=0$, if and only if $M^2$ has constant isotropic mean curvature;
    \item The parabolic normal $\mathbf{G}$ is harmonic, $\Delta\mathbf{G}=0$, if and only if $M^2$ is a piece of a plane.
\end{enumerate}
\end{theorem}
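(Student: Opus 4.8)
The plan is to read off both equivalences directly from the Proposition just proved, which already expresses $\Delta\mathbf{N}_m$ and $\Delta\mathbf{G}$ in terms of $H$, $\nabla H$, and $\tr(S^2)$. For part (1), I would simply observe that $\Delta\mathbf{N}_m=(-2H_1,-2H_2,0)$ vanishes identically if and only if $H_1\equiv H_2\equiv 0$, which over a connected surface is exactly the statement that $H$ is constant. That is the whole of (1), and it is consistent with the invariant examples in Figs.~\ref{fig:CMChelSurf} and \ref{fig:CMCparabRevSurf}.

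For part (2) I would first use admissibility: since $\det(\mathbf{x}_1,\mathbf{x}_2,\mathcal{N})\neq 0$, the triple $\{\mathbf{x}_1,\mathbf{x}_2,\mathcal{N}\}$ is a pointwise frame of $\mathbb{R}^3$ along $M^2$. Rewriting the Proposition as $\Delta\mathbf{G}=-2H_1\mathbf{x}_1-2H_2\mathbf{x}_2-\tr(S^2)\,\mathcal{N}$ and reading off coefficients, $\Delta\mathbf{G}=0$ becomes the three scalar conditions $H_1=0$, $H_2=0$, and $\tr(S^2)=0$. The first two again force $H$ to be constant; the real content is the third. Here I would pass to the normal form $\mathbf{x}=(u^1,u^2,f)$, in which $H=\frac{1}{2}(f_{11}+f_{22})$, $K=f_{11}f_{22}-f_{12}^2$, and the elementary identity $H^2-K=\frac{1}{4}(f_{11}-f_{22})^2+f_{12}^2\geq 0$ holds. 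Since $\tr(S^2)=4H^2-2K$, the equation $\tr(S^2)=0$ says $K=2H^2$; combined with $K\leq H^2$ this forces $H^2\leq 0$, hence $H\equiv 0$ and then $K\equiv 0$. Substituting back into $H^2-K=0$ gives $f_{12}\equiv 0$ and $f_{11}\equiv f_{22}$, and $f_{11}+f_{22}=2H\equiv 0$ then yields $f_{11}\equiv f_{22}\equiv 0$, i.e.\ $\mathrm{Hess}\,f\equiv 0$. Therefore $f$ is affine and $M^2$ is an open piece of a plane. The converse is immediate: on a plane written in normal form $f$ is affine, so $H\equiv 0$, $K\equiv 0$, $\tr(S^2)\equiv 0$, and $\Delta\mathbf{G}=0$ (in fact $\mathbf{G}$ is then a constant vector).

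I do not anticipate a genuine obstacle: once the Proposition is in hand, everything reduces to linear algebra plus the positivity of the discriminant $H^2-K$. The single point deserving care is the inference $\tr(S^2)=0\Rightarrow S\equiv 0$, which would be false for an operator with complex eigenvalues; it is legitimate here because in the normal form $g_{ij}=\delta_{ij}$, so $S$ is represented by the real symmetric matrix $\mathrm{Hess}\,f$ and $\tr(S^2)=\kappa_1^2+\kappa_2^2$ with real isotropic principal curvatures $\kappa_1,\kappa_2$ — the displayed discriminant identity being just the coordinate form of this observation. Connectedness of $M^2$ enters exactly once in each part, to promote $\mathrm{d}H=0$ to ``$H$ is globally constant.''
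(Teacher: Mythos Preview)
Your proposal is correct and, for part (1), identical to the paper's one-line argument. For part (2) your reasoning is sound but more circuitous than necessary: the paper simply observes that in normal form $\tr(S^2)=f_{11}^2+2f_{12}^2+f_{22}^2$ is a sum of squares, so $\tr(S^2)=0$ immediately forces all $f_{ij}=0$ and hence $f$ is affine. Your detour through the inequality $K\le H^2$ to first obtain $H\equiv0$ and $K\equiv0$, and only then recover $\mathrm{Hess}\,f\equiv0$, arrives at the same place but hides the elementary reason; on the other hand, it does make explicit the geometric intermediate step that a surface with harmonic $\mathbf{G}$ is necessarily minimal and flat. Either way the decomposition $\Delta\mathbf{G}=-2\nabla H-\tr(S^2)\mathcal{N}$ together with the linear independence of $\{\mathbf{x}_1,\mathbf{x}_2,\mathcal{N}\}$ is the crux, and you have that right.
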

\begin{proof}
Case (1): notice that $\Delta \mathbf{N}_m=0\Leftrightarrow H_1=0$ and $H_2=0$.

Case (2): here $\Delta \mathbf{G}=0\Leftrightarrow \nabla H=0$ and $\tr(S^2)=0$. Since $\tr(S^2)=(f_{11})^2+2(f_{12})^2+(f_{22})^2$, then $\tr(S^2)=0$ if and only if $f_{ij}=0$. This last condition is equivalent to $f(x,y)=Ax+By+C$, for some constants $A,B,C$.
\end{proof}

\section{Concluding Remarks}

In this work, we pointed to the fact that in $\mathbb{I}^3$ there are more than one meaningful way to define a Gauss map. To better understand this issue, we studied invariant surfaces with coordinate finite-type Gauss map by choosing either the minimal $\mathbf{N}_m$ or the parabolic $\mathbf{G}$ normal, Eqs. \eqref{eq::DefMinimalNormal} and \eqref{pgm}. For $\mathbf{N}_m$, this generically led us to (at least 4-parameters) families of invariant surfaces in Theorems \ref{Thr::FiniteTypeTopViewGaussMapH} and \ref{Thr::FiniteTypeTopViewGaussMap}. On the other hand, for $\mathbf{G}$, the same condition is much more restrictive: we only have planes and certain trigonometric cylinders in Theorems \ref{Thr::FiniteTypeParabGaussMapHSurf}, \ref{Thr::FiniteTypeParabGaussMapParabRevSurf}, and \ref{Thr::FiniteTypeParabGaussMapParabRevSurfNonHarmTopView}. It is worth comparing these results with their Euclidean counterparts. When applied to invariant surfaces in $\mathbb{E}^3$, the coordinate finite-type condition leads to circular cylinder and spheres \cite{BV1993,BB1992,DPVG1990} only, in contrast to what happens in isotropic space where  we may have very distinct classes of solutions depending on the choice of the Gauss map. 

It remains to attack the same problem without the hypothesis of invariance. In this respect, it would be interesting to look for examples in other classes of surfaces, such as translation \cite{AE2017,BZ2008,BKY2016,BKY2017,S14} and factorable \cite{AEE20} surfaces. Here, we were only able to  characterize non-necessarily invariant surfaces with harmonic Gauss map. This led us to constant mean curvature surfaces for $\mathbf{N}_m$ and planes for $\mathbf{G}$, Theorem \ref{Thr::CharHarmonicGaussMaps}. It is worth mentioning that it is also possible to characterize constant mean curvature surfaces in $\mathbb{E}^3$ by using their Gauss map by relaxing the eigenvalue equation in allowing it to be satisfied pointwisely \cite{KK15,T16}, see Eq. \eqref{eq::LapEuclNormal}. Thus, we may naturally ask how much can we enlarge the class of solutions by studying surfaces with pointwise finite type Gauss map in $\mathbb{I}^3$.

\section*{Acknowledgements}
The Authors would like to thank Nurettin Cenk Turgay for useful discussions. In addition, the second named Author would like to thank the financial support provided by the Morá Miriam Rozen Gerber Fellowship for Brazilian Postdocs.







\begin{thebibliography}{10}

\bibitem{AFL1992} 
L.J. Al\'ias, A. Ferr\'andez, P. Lucas, Submanifolds in pseudo-Euclidean spaces satisfying the condition $\Delta x=Ax+b$, Geom. Dedicata {\bf42} (1992) 345--354.

\bibitem{AFL1998}
L.J. Al\'ias, A. Ferr\'andez, P. Lucas, On the Gauss map of B-scrolls, Tsukuba J. Math. {\bf22} (1998) 371--377.

\bibitem{AGM1998} 
J. Arroyo, O.J.  Garay, J.J. Menc\'ia, On a family of surfaces of revolution of finite Chen-type, Kodai Math. J. {\bf21} (1998) 73--80.

\bibitem{AEE20}
M.E. Aydin, A. Erdur, M. Ergut, Affine factorable surfaces in isotropic spaces, TWMS J. Pure Appl. Math. {\bf11} (2020) 72--88.

\bibitem{AE2017} 
M.E. Aydin, M. Erg\"{u}t, Affine translation surfaces in the isotropic 3-space. Int. Electron, J. Geom. {\bf10} (2017) 21--30.

\bibitem{BV1993}
C. Baikoussis, L.  Verstraelen, On the Gauss map of helicoidal surfaces, Rend. Sem. Mat. Messina Ser. II {\bf2} (1993) 31--42 

\bibitem{BB1992} 
C. Baikoussis, D.E. Blair, On the Gauss map of ruled surfaces, Glasgow Math. J. {\bf34} (1992) 355--359.


\bibitem{BZ2008} 
M. Bekkar, H. Zoubir, Surfaces of revolution in the 3-dimensional Lorentz-Minkowski space satisfying $\Delta x_i = \lambda_i x_i$, Int. J. Contemp. Math. Sci. {\bf3} (2008) 1173--1185.

\bibitem{BKY2016} 
B. Bukcu, M.K. Karacan, D.W. Yoon, Translation surfaces in the three-dimensional simply isotropic space $\mathbb{I}^1_3$ satisfying $\Delta^{III} x_i=\lambda_i x_i$, Konuralp J. Math.
{\bf4} (2016) 275--281.

\bibitem{BKY2017} 
B. Bukcu, M.K. Karacan, D.W. Yoon, Translation surfaces of type-2 in the three-dimensional simply isotropic space $\mathbb{I}^1_3$, Bull. Korean Math. Soc. {\bf54} (2017)  953--965.

\bibitem{CKK2019} 
A. Cakmak, M.K. Karacan, S. Kiziltug, Dual surfaces defined by $z = f(u) + g(v)$ in simply isotropic 3-space $\mathbb{I}^1_3$, Commun. Korean Math. Soc. {\bf34} (2019) 267--277. 

\bibitem{C1984} 
B.-Y. Chen, Total mean curvature and submanifolds of finite type. World Scientific, New Jersey (1984).

\bibitem{C2014} 
B.-Y. Chen, Some open problems and conjectures on submanifolds of finite type: recent development, Tamkang J. Math. {\bf45} (2014) 87--108.

\bibitem{C1996} 
B.-Y. Chen, A report on submanifolds of finite type, Soochow J. Math. {\bf22} (1996) 117--337.

\bibitem{C1987} 
B.-Y. Chen, Surfaces of finite type in Euclidean 3-space, Bull. Soc. Math. Belg. Ser. B {\bf39} (1987) 243--254.

\bibitem{CMN1986} 
B.-Y. Chen, J. Morvan, T. Nore, Energy, tension and finite type maps, Kodai Math. J. {\bf9} (1986) 406--418.

\bibitem{CP1987} 
B.-Y. Chen, P.  Piccinni, Submanifolds with finite type Gauss map, Bull. Austral. Math. Soc. {\bf35} (1987) 161--186.


\bibitem{C1995} 
S.M. Choi, On the Gauss map of surfaces of revolution in a 3-dimensional Minkowski space, Tsukuba J. Math. {\bf19} (1995) 351--367.

\bibitem{CR1995} 
S.M. Choi, On the Gauss map of ruled surfaces in a 3-dimensional Minkowski space, Tsukuba J. Math. {\bf19} (1995) 285--304.

\bibitem{daSilvaTamkang}
L.C.B. da Silva, Rotation minimizing frames and spherical curves in simply isotropic and pseudo-isotropic 3-spaces, Tamkang J. Math. {\bf51} (2020) 31--52.

\bibitem{daSilvaJG2019}
L.C.B. da Silva, The geometry of Gauss map and shape operator in simply isotropic and pseudo-isotropic spaces, J. Geom. {\bf110} (2019) 31.

\bibitem{daSilvaMJOU2019}
L.C.B. da Silva, Differential geometry of invariant surfaces in simply isotropic and pseudo-isotropic spaces, Math. J. Okayama Univ. (2021), in press, arXiv:1810.00080.

\bibitem{DPV1990}
F. Dillen, J. Pas, L.  Verstraelen, On surfaces of finite type in Euclidean 3-space, Kodai Math. J. {\bf13} (1990) 10--21.

\bibitem{DPVG1990} 
F. Dillen, J. Pas, L.  Verstraelen, On the Gauss map of surfaces of revolution, Bull. Inst. Math. Acad. Sinica {\bf18} (1990) 239--246.

\bibitem{G1988}
O.J. Garay, On a certain class of finite type surfaces of revolution. Kodai Math. J. {\bf11} (1988) 25--31.

\bibitem{G1990}
O.J. Garay, An extension of Takahashi’s theorem, Geom. Dedicata {\bf34} (1990) 105--112.

\bibitem{HB2009} 
Ch.B. Hamed, M. Bekkar, Helicoidal Surfaces in the three-dimensional Lorentz-Minkowski space satisfying $\Delta x_i = \lambda_i x_i$, Int. J. Contemp. Math. Sci. {\bf4} (2009) 311--327.

\bibitem{HV1991}
T. Hasanis, T. Vlachos, Coordinate finite-type submanifolds. Geom. Dedicata {\bf37} (1991) 155--165.

\bibitem{J1996}
C. Jang, Surfaces with 1-type Gauss map, Kodai Math. J. {\bf19} (1996) 388--394.


\bibitem{KYB2016} 
M.K. Karacan, D.W. Yoon, B. Bukcu, Translation surfaces in the three-dimensional simply isotropic space $\mathbb{I}^1_3$, Int. J. Geom. Meth. Mod. Phys. {\bf13} (2016) 1650088. 

\bibitem{KYB2017}
M.K. Karacan, D.W. Yoon, B. Bukcu, Surfaces of revolution in the three-dimensional simply isotropic space $\mathbb{I}^1_3$, Asia Pac. J. Math. {\bf4} (2017) 1--10.

\bibitem{KYK2017} 
M.K. Karacan, D.W. Yoon, S. Kiziltug, Helicoidal surfaces in the three-dimensional simply isotropic space $\mathbb{I}^1_3$, Tamkang J. Math. {\bf48} (2017) 123--134.

\bibitem{KYY2017} 
M.K. Karacan, D.W. Yoon, N. Yuksel, Classification of some special types ruled surfaces in simply isotropic 3-space, Analele Universitatii de Vest, Timisoara Seria Matematica – Informatica {\bf55} (2017) 87--98.

\bibitem{KK15}
D.-S. Kim, H.K. Kim, Shape operator and Gauss map of pointwise 1-type, J. Korean Math. Soc. \textbf{52} (2015) 1337--1346.

\bibitem{Mueller1921}
E. M\"uller, Relative Minimalfl\"achen, Monatsh. Math. Phys. {\bf31} (1921) 3--19.
 
\bibitem{Olver1972} 
F.W.J. Olver, Bessel functions of integer order. In: M. Abramowitz, I.A. Stegun (eds.), Handbook of mathematical functions: with formulas, graphs and mathematical tables, pp. 355--434. Dover, New York (1972).

\bibitem{Sachs1990}
H. Sachs, Isotrope {G}eometrie des {R}aumes, Vieweg, Braunschweig/Wiesbaden (1990).
 
\bibitem{SatoArXiv2018}
Y. Sato, $d$-minimal surfaces in three-dimensional singular semi-Euclidean space $\mathbb{R}^{0,2,1}$, e-print arXiv:1809.07518

\bibitem{SB2015}
B. Senoussi, M. Bekkar, Helicoidal surfaces with $\Delta^J r = Ar$ in 3-dimensional Euclidean space, Stud. Univ. Babes-Bolyai Math. {\bf60} (2015) 437--448.

\bibitem{Simon1991}
U. Simon, A. Schwenk-Schellschmidt, H. Viesel, Introduction to the affine differential geometry of hypersurfaces, Science University of Tokyo, Tokyo (1991).

\bibitem{S14}
\v{Z}.M. \v{S}ipu\v{s}, Translation surfaces of constant curvatures in a simply isotropic space, Period. Math. Hung. {\bf68} (2014) 160--175.

\bibitem{T1966} 
T. Tahakashi, Minimal immersions of Riemannian manifolds. J. Math. Soc. Japan {\bf18} (1966) 380--385.

\bibitem{T16}
N.C. Turgay, Lorentzian submanifolds in semi-Euclidean spaces with pointwise 1-type Gauss map, Geom. Integrability $\&$ Quantization \textbf{17} (2016) 344--359.

\bibitem{D2012} 
D.W. Yoon, Some classification of translation surfaces in Galilean 3-spaces, Int. J. Math. Anal. {\bf6} (2012) 1355--1361.

\bibitem{D2013} 
D.W. Yoon, Surfaces of revolution in the three dimensional pseudo-Galilean space, Glas. Mat. {\bf48} (2013) 415--428.

\bibitem{D2015} 
D.W. Yoon, Classification of rotational surfaces in pseudo-Galilean space, Glas. Mat. {\bf50} (2015) 453--465.

\end{thebibliography}



\end{document}